\numberwithin{equation}{section}
\newtheorem{theorem}{Theorem}[section]
\newtheorem{lemma}[theorem]{Lemma}
\newtheorem{proposition}[theorem]{Proposition}
\newtheorem{corollary}[theorem]{Corollary}
\theoremstyle{definition}
\newtheorem{definition}[theorem]{Definition}
\newtheorem{example}[theorem]{Example}
\newtheorem{remark}[theorem]{Remark}
\newcommand\Supp{\operatorname{Supp}}
\newcommand\Ass{\operatorname{Ass}}
\newcommand\Ann{\operatorname{Ann}}
\newcommand\Hom{\operatorname{Hom}}
\newcommand\Ext{\operatorname{Ext}}
\newcommand\Rad{\operatorname{Rad}}
\newcommand\grade{\operatorname{grade}}
\newcommand\height{\operatorname{height}}
\newcommand\Spec{\operatorname{Spec}}
\newcommand{\gam}{\Gamma_{I}}
\newcommand{\qism}{\stackrel{\sim}{\longrightarrow}}
\begin{document}

\author[P.~Schenzel]{Peter Schenzel}
\title[Connectedness and indecomposibility]
{On connectedness and indecomposibility of local cohomology modules}
\address{Martin-Luther-Universit\"at Halle-Wittenberg,
Institut f\"ur Informatik, D --- 06 099 Halle (Saale), Germany}
\email{peter.schenzel@informatik.uni-halle.de}

\subjclass[2000]{Primary:  13D45; Secondary:  14B15, 13H10}
\keywords{Local cohomology, connectedness, indecomposable module}

\begin{abstract} Let $I$ denote an ideal of a local Gorenstein ring $(R, \mathfrak m)$.
Then we show that the local cohomology module $H^c_I(R), c = \height I,$ is indecomposable
if and only if $V(I_d)$ is connected in codimension one. Here $I_d$ denotes the intersection of the highest dimensional primary components of $I.$ This is a partial extension of a result
shown by Hochster and Huneke in the case $I$ the maximal ideal. Moreover there is an
analysis of connectedness properties in relation to various aspects of local cohomology. Among
others we show that the endomorphism ring of $H^c_I(R)$ is a local Noetherian ring if $\dim R/I = 1.$
\end{abstract}

\maketitle

\section{Introduction}
Let $(A, \mathfrak m, k)$ denote a local Noetherian ring. Let $I \subset A$
be an ideal. Let $H^i_I(A), i \in \mathbb Z,$ denote the local cohomology
modules of $A$ with respect to $I.$ In their paper (cf. \cite{HH}) Hochster
und Huneke have shown -- among others -- that $H^d_{\mathfrak m}(A), d =
\dim A,$ is an indecomposable $A$-module if and only if $\Spec \hat{A}/0_d$ is
connected in codimension one, where $0_d$ denotes the intersection of the highest 
dimensional primary components of the zero ideal. See Corollary \ref{6.3} for 
the precise statements. One of our aims is to generalize their result.

The study of connectedness in commutative algebra and algebraic geometry has a 
long tradition. The first result in this direction was shown by Hartshorne 
(cf. \cite{rH}) who introduced the notion of connectedness in codimension one.

For a commutative ring $A$ of finite dimension we denote by $\mathbb G_A$ the undirected graph
whose vertices are primes ideals $\mathfrak p$ of $A$ with $\dim A = \dim A/\mathfrak p$ and two
distinct vertices $\mathfrak p, \mathfrak q$ are joined by an edge if and only if $\mathfrak p + \mathfrak q$ is of height one. For an ideal $I$ of $A$ let $I_d$ denote the intersection of all primary
components of $I$ that are of highest dimension.

\begin{theorem} \label{1.1} Let $(R, \mathfrak m)$ denote a Gorenstein ring. For 
an ideal $I$ of $R$ the following conditions are equivalent:
\begin{itemize}
  \item[(i)] The local cohomology module $H^c_I(R)$ is indecomposable.
  \item[(ii)] The graph $\mathbb G_{R/I}$ is connected.
  \item[(iii)] The variety $V(I_d)$ is connected in codimension one.
\end{itemize}
\end{theorem}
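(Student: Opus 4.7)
The plan is to establish (ii) $\Leftrightarrow$ (iii) combinatorially, and to prove (i) $\Leftrightarrow$ (iii) by combining a Mayer--Vietoris argument for (i) $\Rightarrow$ (iii) with a localisation argument for (iii) $\Rightarrow$ (i) that reduces the main content to the one-dimensional case $\dim R/I = 1$. For (ii) $\Leftrightarrow$ (iii), since $R$ is Gorenstein (hence catenary and equidimensional), the vertices of $\mathbb G_{R/I}$ are exactly the primes in $\Min(I_d)$, and an edge joins $\mathfrak p, \mathfrak q$ precisely when $\height(\mathfrak p+\mathfrak q) = c+1$ in $R$. A disconnection of $\mathbb G_{R/I}$ is then the same data as a partition $\Min(I_d) = P_1 \sqcup P_2$ (both nonempty) such that $\height(\mathfrak p+\mathfrak q) \geq c+2$ for all $\mathfrak p \in P_1$ and $\mathfrak q \in P_2$; writing $J_i = \bigcap_{\mathfrak p \in P_i}\mathfrak p$, this translates exactly into the codim-$\geq 2$ intersection condition defining failure of (iii).

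For (i) $\Rightarrow$ (iii) I would argue the contrapositive via Mayer--Vietoris. Given a decomposition $I_d = J_1 \cap J_2$ with $\height(J_1+J_2) \geq c+2$ (which exists if (iii) fails), Cohen--Macaulayness of $R$ gives $\grade(J_1+J_2,R) = \height(J_1+J_2) \geq c+2$, whence $H^{c-1}_{J_1+J_2}(R) = H^c_{J_1+J_2}(R) = 0$. The Mayer--Vietoris sequence
\[
H^{c-1}_{J_1+J_2}(R) \to H^c_{I_d}(R) \to H^c_{J_1}(R) \oplus H^c_{J_2}(R) \to H^c_{J_1+J_2}(R)
\]
then identifies $H^c_I(R) = H^c_{I_d}(R) \cong H^c_{J_1}(R) \oplus H^c_{J_2}(R)$, a nontrivial direct sum decomposition, contradicting (i).

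For (iii) $\Rightarrow$ (i), assume for a contradiction that $H^c_I(R) = M_1 \oplus M_2$ nontrivially. For each $\mathfrak p \in \Min(I_d)$, the ring $R_\mathfrak p$ is Gorenstein of dimension $c$ and $IR_\mathfrak p$ is $\mathfrak p R_\mathfrak p$-primary, so $H^c_I(R)_\mathfrak p \cong E_{R_\mathfrak p}(R_\mathfrak p/\mathfrak p R_\mathfrak p)$ is indecomposable; hence exactly one of $(M_j)_\mathfrak p$ is nonzero. This yields a partition $\Min(I_d) = P_1 \sqcup P_2$, with both pieces nonempty since each $\Supp M_j$ is a nonempty closed subset of $V(I_d)$. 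Under (iii) this partition cannot be a codim-one disconnection, so there exist $\mathfrak p_j \in P_j$ and a prime $\mathfrak q$ of height $c+1$ with $\mathfrak q \supseteq \mathfrak p_1 + \mathfrak p_2$. Localising at $\mathfrak q$, both $(M_j)_\mathfrak q$ are nonzero by specialisation, producing a nontrivial direct sum decomposition of $H^c_{IR_\mathfrak q}(R_\mathfrak q)$ inside the Gorenstein local ring $R_\mathfrak q$ of dimension $c+1$ with $\dim R_\mathfrak q/IR_\mathfrak q = 1$.

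The main obstacle is therefore the one-dimensional case: showing that whenever $\dim R/I = 1$, the module $H^c_I(R)$ is indecomposable. The abstract announces the stronger statement that $\operatorname{End}_R(H^c_I(R))$ is then a local Noetherian ring, which suffices since a local ring has no nontrivial idempotents. I would expect this to rest on an explicit computation of the endomorphism ring via Matlis and local duality, in the spirit of Hochster--Huneke's identification $\operatorname{End}_R(H^d_{\mathfrak m}(R)) \cong \widehat R$; however the fact that $H^c_I(R)$ is no longer Artinian when $I \neq \mathfrak m$ forces a more delicate completion argument, and this is the technical heart of the paper. Once it is available, the localisation above closes (iii) $\Rightarrow$ (i) and completes the theorem.
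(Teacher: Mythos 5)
Your overall architecture coincides with the paper's own proof of this theorem (Theorem \ref{7.1}): (ii) $\Leftrightarrow$ (iii) by unwinding definitions, (i) $\Rightarrow$ (iii) by the contrapositive via Mayer--Vietoris, and (iii) $\Rightarrow$ (i) by first establishing that $H^c_I(R)$ is indecomposable in codimension one and then crossing between the two summands along an edge of the graph (your ``crossing edge'' is a streamlined version of the chain argument of Proposition \ref{2.1}). Two small points: you use $H^c_I(R)=H^c_{I_d}(R)$ without justification (the paper proves it first, via Lyubeznik's Mayer--Vietoris argument, using $\height(I_d+J)\geq c+2$), and your displayed Mayer--Vietoris sequence has the terms $H^c_{J_1\cap J_2}$ and $H^c_{J_1}\oplus H^c_{J_2}$ in the wrong order --- harmless here, since $\grade(J_1+J_2)\geq c+2$ kills both $H^c_{J_1+J_2}$ and $H^{c+1}_{J_1+J_2}$.

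The genuine gap is the step you yourself flag: the indecomposability of $H^c_I(R)$ when $\dim R/I=1$ is asserted, not proved, and it is exactly the load-bearing technical input. The paper supplies it as Lemma \ref{4.3}: using the truncation complex $C^{\cdot}_R(I)$ of Hellus--Schenzel one gets an exact sequence
\[
0 \to \Ext^n_R(H^n_I(R),R) \to \hat R^I \to \Ext^c_R(H^c_I(R),R) \to 0,
\]
together with the identification $\Ext^c_R(H^c_I(R),R)\simeq \Hom_R(H^c_I(R),H^c_I(R))$ (Lemma \ref{4.1}, itself nontrivial and specific to the Gorenstein case) and $\Ext^n_R(H^n_I(R),R)\simeq \varprojlim H^0_{\mathfrak m}(R/I^{\alpha})=u_{\hat R^I}(I)$ via local and Matlis duality. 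This yields $\Hom_R(H^c_I(R),H^c_I(R))\simeq \hat R^I/u_{\hat R^I}(I)$, a local Noetherian ring, so $H^c_I(R)$ has no nontrivial idempotent endomorphisms. Your guess about the mechanism (duality plus a completion along $I$) is in the right direction, but without an actual argument the implication (iii) $\Rightarrow$ (i) is not established; note also that the naive analogue of Hochster--Huneke fails here in the sense that the endomorphism ring need not even be a finitely generated $R$-module (Example \ref{7.2}), so the one-dimensionality of $R/I$ is used in an essential way.
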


As a main technical tool we use connectedness properties of certain rings of sections. For a
local Noetherian ring $(A, \mathfrak m)$ and an ideal $I$ let $D_I(A)$ denote the ring of regular functions on $\Spec A \setminus V(I).$ Moreover, let $D^I(A) = \varprojlim D_{\mathfrak m}(A/I^{\alpha})$
denote the ring of regular formal functions of the completion of $U = \Spec A \setminus \{\mathfrak m\}$ along $V(I)\cap U.$  Then the following is shown:

\begin{theorem} \label{1.2} Let $I$ be an ideal in a local Noetherian ring $(A,\mathfrak m).$ The the
following conditions are equivalent:
\begin{itemize}
\item[(i)] $D^I(A)$ is indecomposable as a ring.
\item[(ii)] $V(I \hat A) \setminus \{\mathfrak m \hat A\}$ is connected.
\end{itemize}
\end{theorem}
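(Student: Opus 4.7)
The approach is to set up a natural bijection between idempotents of $D^I(A)$ and clopen subsets of $X := V(I\hat A) \setminus \{\mathfrak m \hat A\}$, and then invoke the standard fact that a commutative ring is indecomposable exactly when it has no idempotents other than $0$ and $1$.

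\textbf{Reduction to the complete case.} I would first replace $A$ by its $\mathfrak m$-adic completion, showing that the natural ring maps $A/I^\alpha \to \hat A/I^\alpha\hat A$ induce an isomorphism $D^I(A) \cong D^{I\hat A}(\hat A)$ in the limit. The key tool is the four-term exact sequence
\[
0 \to H^0_{\mathfrak m}(A/I^\alpha) \to A/I^\alpha \to D_{\mathfrak m}(A/I^\alpha) \to H^1_{\mathfrak m}(A/I^\alpha) \to 0,
\]
together with the completion-invariance of local cohomology. After this reduction, we may assume $A = \hat A$, so that $X = V(I) \setminus \{\mathfrak m\}$.

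\textbf{Idempotents as clopen subsets.} Set $U_\alpha = \Spec(A/I^\alpha)\setminus\{\mathfrak m\}$, so that $D_{\mathfrak m}(A/I^\alpha) = \Gamma(U_\alpha,\mathcal O_{U_\alpha})$. Because $V(I^\alpha) = V(I)$ set-theoretically, all $U_\alpha$ share the single underlying topological space $X$. Any idempotent $e \in \Gamma(U_\alpha,\mathcal O_{U_\alpha})$ specializes at each $\mathfrak p \in X$ to an idempotent of the local ring $(A/I^\alpha)_{\mathfrak p}$, which must be $0$ or $1$; continuity then forces $e$ to be the characteristic function of a clopen subset of $X$, yielding a bijection between idempotents of $D_{\mathfrak m}(A/I^\alpha)$ and clopen subsets of $X$. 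The transition $D_{\mathfrak m}(A/I^{\alpha+1}) \to D_{\mathfrak m}(A/I^\alpha)$ is induced by a closed immersion $U_\alpha \hookrightarrow U_{\alpha+1}$ which is a homeomorphism on underlying spaces, so this bijection is compatible with the inverse system; hence the idempotents of $D^I(A) = \varprojlim D_{\mathfrak m}(A/I^\alpha)$ are in bijection with the clopen subsets of $X$, and the equivalence (i) $\Leftrightarrow$ (ii) follows.

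\textbf{Main difficulty.} The technical heart is the reduction to the complete case: although $H^i_{\mathfrak m}(A/I^\alpha)$ is invariant under $\mathfrak m$-adic completion for each $\alpha$, the middle term $A/I^\alpha$ itself is not, so the isomorphism $D^I(A) \cong D^{I\hat A}(\hat A)$ must be teased out at the level of inverse systems rather than termwise. Once this reduction is in place, the remainder is essentially a topological statement about idempotents of rings of global sections of structure sheaves and is nearly mechanical.
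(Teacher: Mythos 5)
Your idempotent--clopen dictionary is sound and, for a fixed ring, it is essentially the mechanism behind the paper's proof of the complete case (Theorem \ref{8.2}): the paper proves (ii) $\Rightarrow$ (i) by projecting a hypothetical nontrivial idempotent of the limit to each $D_{\mathfrak m}(A/I^{\alpha})$ and invoking the indecomposability of these rings (Theorem \ref{3.2}), while for (i) $\Rightarrow$ (ii) it uses the Mayer--Vietoris sequence of Lemma \ref{8.1}. Your version handles both directions at once, since a nontrivial clopen subset of $X$ yields a compatible system of nontrivial idempotents and hence a nontrivial idempotent of the limit; that part is correct and arguably cleaner than the paper's two separate arguments.

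The gap is precisely the step you flag as the main difficulty, and it is not a gap that can be closed: the natural map $D^I(A)\to D^{I\hat A}(\hat A)$ is not an isomorphism in general. Only the two outer terms $H^0_{\mathfrak m}$ and $H^1_{\mathfrak m}$ of the four-term sequence are completion-invariant; the middle term $A/I^{\alpha}$ is not, and the discrepancy survives the limit. For $A=k[x,y]_{(x,y)}$ and $I=(x)$ one computes $D^I(A)=\varprojlim k(y)[x]/(x^{\alpha})=k(y)[[x]]$, whereas $D^{I\hat A}(\hat A)=L[[x]]$ with $L=\mathrm{Frac}(k[[y]])\supsetneq k(y)$. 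Worse, your own dictionary applied directly to $A$ identifies the idempotents of $D^I(A)$ with the clopen subsets of $V(I)\setminus\{\mathfrak m\}$, not of $V(I\hat A)\setminus\{\mathfrak m\hat A\}$, and these spaces can have different connectivity: for $I=(y^2-x^2-x^3)\subset A=k[x,y]_{(x,y)}$ (characteristic $\neq 2$) the first space is the single point $(f)$, while the second consists of the two analytic branches and is disconnected; here $D^I(A)=\varprojlim A_{(f)}/f^{\alpha}A_{(f)}$ is a complete discrete valuation ring and hence indecomposable. So your argument actually proves that (i) is equivalent to connectedness of $V(I)\setminus\{\mathfrak m\}$ --- which is the statement the paper itself establishes, but only under the hypothesis that $A$ is complete (Theorem \ref{8.2}). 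To reach the statement involving $\hat A$ you must either assume $A$ complete from the outset or replace $D^I(A)$ by $\varprojlim D_{\mathfrak m\hat A}(\hat A/I^{\alpha}\hat A)$; as written, the reduction step fails and the asserted equivalence with (ii) does not follow.
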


This result has applications (cf. Theorem \ref{8.4}) related to the vanishing of local cohomology
$H^i_I(A)$ for $i = \dim A, \dim A -1.$ Connectedness and indecomposability are closely related as
it is shown in Section 2 (cf. in particular Proposition \ref{2.1}). In Section 3 we study the indecomposability of strict transforms in relation to connectedness. In Section 4 (cf. Lemma \ref{4.3}) we calculate the endomorphism ring of $H^c_I(R), c = \height I,$ for a one-dimensional ideal $I$ in a
Gorenstein ring $R.$ Note that recently there is some interest of endomorphism rings of this type (cf.
\cite{HeS} and the references there).

In Section 5 we modify the definition of a certain graph related to connectedness as introduced
by Hochster and Huneke (cf. \cite{HH}) and Lyubeznik (cf. \cite{gL2}). In Section 6 we use our methods in order to reprove the above mentioned result of Hochster and Huneke. The proof of Theorem \ref{1.1} is
done in Section 7. In the final Section 8 we prove Theorem \ref{1.2} and its consequences about
the formal cohomology. For some techniques of homological algebra we use the textbook \cite{cW}.

\section{Indecomposability}
Let $A$ denote a commutative Noetherian ring. A subset $T \subset \Spec A$ is called
stable with respect to specialization whenever $\mathfrak p \subset \mathfrak q$
and $\mathfrak p \in T$ implies that $\mathfrak q \in T.$ For instance
\[
V(I) = \{\mathfrak p \in \Spec A | I \subseteq \mathfrak p\},
\]
$I$ an ideal of $A,$ is stable with respect to specialization.

In the sequel there is a certain use of the indecomposability of an $A$-module $M.$ An $A$-module $M$ is called
indecomposable whenever a direct sum decomposition $M = M_1 \oplus M_2$ provides either $M_1
= 0$ or $M_2 = 0.$ Moreover $M$ is called indecomposable in
codimension one whenever $M_{\mathfrak p}$ is an indecomposable
$A_{\mathfrak p}$-module for all $\mathfrak p \in \Supp M$ with
$\dim M_{\mathfrak p} \leq 1.$

\begin{proposition} \label{2.1} Let $A$ be a commutative Noetherian
ring. Let $T$ be a subset of $\Spec A$ stable with  respect to specialization. Let $M$ be an
$A$-module satisfying the following conditions:
\begin{itemize}
\item[(a)] $\Ass_A M \cap T = \emptyset.$

\item[(b)] $\Supp_A M \setminus T$ is connected.

\item[(c)] $M_{\mathfrak p}$ is indecomposable (as $A_{\mathfrak p}$-module)
for all $\mathfrak p \in \Supp_A M \setminus T.$
\end{itemize}
Then $M$ itself is indecomposable as $A$-module.
\end{proposition}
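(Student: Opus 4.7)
I would argue by contradiction: suppose $M = M_1 \oplus M_2$ with both summands nonzero, and derive a disconnection of $\Supp_A M \setminus T$ in violation of (b).

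For any prime $\mathfrak p \in \Supp_A M \setminus T$, localization gives $M_\mathfrak p = (M_1)_\mathfrak p \oplus (M_2)_\mathfrak p$, and hypothesis (c) forces exactly one of the summands $(M_i)_\mathfrak p$ to vanish. Setting $V_i := \Supp_A M_i \setminus T$, this produces a disjoint partition
\[
\Supp_A M \setminus T = V_1 \sqcup V_2.
\]
Both $V_i$ are nonempty: $A$ is Noetherian and $M_i \neq 0$, so $\Ass_A M_i$ is nonempty; since $\Ass_A M_i \subseteq \Ass_A M$, hypothesis (a) yields $\Ass_A M_i \cap T = \emptyset$, and any associated prime of $M_i$ lies in $V_i$.

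Next, each $V_i$ is stable under specialization within $\Supp_A M \setminus T$: the support of $M_i$ is specialization-closed in $\Spec A$, and $T$ is specialization-closed by assumption, so if $\mathfrak p \in V_i$ and $\mathfrak q \in \Supp_A M \setminus T$ satisfies $\mathfrak q \supseteq \mathfrak p$, then $\mathfrak q \in \Supp_A M_i$, whence $\mathfrak q \in V_i$.

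The final step, which I expect to be the main obstacle, is to convert the disjoint specialization-stable partition $V_1 \sqcup V_2$ into a genuine topological disconnection of $\Supp_A M \setminus T$ in contradiction with (b). Specialization-closed subsets of the Zariski topology are not automatically closed as subspaces in general, so some care is required here. The crucial observation is that no chain of comparable primes contained in $\Supp_A M \setminus T$ can cross between $V_1$ and $V_2$, so two primes lying in different pieces cannot be linked by any such chain; under the natural reading of connectedness in this algebraic-geometric context (via chains of comparable primes, which coincides with the subspace topology in the cases of interest), this directly contradicts the connectedness hypothesis (b).
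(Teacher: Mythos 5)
Your proof is correct and is essentially the paper's own argument: both localize at a prime to see via (c) that exactly one summand survives there, propagate membership along specializations, and walk a chain from an associated prime of one summand to one of the other. The "main obstacle" you flag is not an issue here, since the paper itself reads hypothesis (b) exactly in the chain sense (a sequence $\mathfrak p = \mathfrak p_0,\ldots,\mathfrak p_s=\mathfrak q$ with primes $\mathfrak P_i \supseteq \mathfrak p_{i-1}+\mathfrak p_i$ all lying in $\Supp_A M\setminus T$), which is precisely the no-crossing argument you give.
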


\begin{proof} Suppose that $M = U \oplus V$ for two non-trivial
$A$-modules $U, V.$  Now choose two prime ideals  $\mathfrak p \in \Ass_A U$ and
$\mathfrak q \in \Ass_A V$ resp. Then
\[
\mathfrak p, \mathfrak q \in \Ass_A M \subset \Supp_A M \setminus T
\]
as it follows by virtue of (a).

By the assumption in (b) there is a family
\[
\mathfrak p = \mathfrak p_0, \mathfrak p_1, \ldots, \mathfrak p_s
= \mathfrak q
\]
of prime ideals in $\Supp_A M \setminus T$ and a family
$\mathfrak P_1, \ldots, \mathfrak P_s$
of prime ideals in $\Supp_A M \setminus T$ such that
\[
\mathfrak p_{i-1} + \mathfrak p_i \subset \mathfrak P_i, i =
1,\ldots,s.
\]
Because of $\mathfrak p = \mathfrak p_0 \in \Supp_A U$ it turns out that
$\mathfrak P_1 \in \Supp_A U.$ By the indecomposibility, as assumed by (c), we see that $\mathfrak P_1 \not\in \Supp_A V.$
Therefore $\mathfrak p_1 \not\in \Supp_A V$ while $\mathfrak p_1 \in \Supp_A U.$
Iterating this argument s times it follows that $\mathfrak q =
\mathfrak p_s \not\in \Supp_A V,$ in contradiction to the choice of
$\mathfrak q.$
\end{proof}

Next we present a first example of a module that is
indecomposable in codimension one.

\begin{proposition} \label{2.3} Let $(R, \mathfrak m)$ denote a
local Gorenstein ring. Let $I \subset R$ be an ideal with $c = \height I.$
The canonical module $K(R/I) = \Ext_R^c(R/I,R)$ is indecomposable
in codimension one.
\end{proposition}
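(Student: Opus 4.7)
The plan is to localize. I would fix a prime $\mathfrak p \in \Supp_R K(R/I) = V(I_d)$ with $\dim K(R/I)_{\mathfrak p} \leq 1$ and show that $K(R/I)_{\mathfrak p}$ is indecomposable as an $R_{\mathfrak p}$-module. Because $\mathfrak p$ contains a minimal prime of $I$ of height $c$, one checks that $(I_d)R_{\mathfrak p} = (IR_{\mathfrak p})_d$ and that $K(R/I)_{\mathfrak p} = \Ext^c_{R_{\mathfrak p}}(R_{\mathfrak p}/IR_{\mathfrak p}, R_{\mathfrak p}) = K(R_{\mathfrak p}/IR_{\mathfrak p})$. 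Replacing $R$ by $R_{\mathfrak p}$ thus reduces the task to: for a local Gorenstein ring $R$ and an ideal $I$ of height $c$ with $\dim R/I_d \leq 1$, prove $K(R/I)$ is indecomposable.

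Next I would replace $K(R/I)$ by $K(R/I_d)$. The short exact sequence $0 \to I_d/I \to R/I \to R/I_d \to 0$ together with the observation that $I_d/I$ is supported on embedded primes of $I$, each of height $>c$, yields $\grade_R(I_d/I) > c$ (using that $R$ is Cohen--Macaulay). Hence $\Ext^{c-1}_R(I_d/I, R) = \Ext^c_R(I_d/I, R) = 0$, and the long exact sequence gives $\Ext^c_R(R/I_d, R) \cong \Ext^c_R(R/I, R)$, that is, $K(R/I) \cong K(R/I_d)$.

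Now $R/I_d$ is unmixed of dimension $0$ or $1$; in the latter case unmixedness forces $\depth R/I_d \geq 1$, so $R/I_d$ is Cohen--Macaulay. If $\dim R/I_d = 0$, then $R/I_d$ is Artinian local and $K(R/I_d) = \Hom_R(R/I_d, E_R(k))$ is the injective hull of the residue field over the local ring $R/I_d$, hence indecomposable. If $\dim R/I_d = 1$, I would invoke the classical identity $\End_{R/I_d} K(R/I_d) = R/I_d$ valid for any Cohen--Macaulay local ring with canonical module; since $R/I_d$ is local, its endomorphism ring contains no non-trivial idempotents and $K(R/I_d)$ is indecomposable.

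The main obstacle I anticipate is the bookkeeping for the reduction steps: verifying that the formation of $I_d$ commutes with localization at primes of $V(I_d)$ and that the passage from $R/I$ to its unmixed quotient $R/I_d$ preserves the canonical module. Once those identifications are in place, the dimension~$0$ case is just Matlis duality and the dimension~$1$ case follows from the standard description of the endomorphism ring of the canonical module of a Cohen--Macaulay local ring.
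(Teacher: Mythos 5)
Your proposal is correct and follows essentially the same route as the paper: localize at $\mathfrak p$, reduce to the unmixed ideal $I_d$ so that $(R/I_d)_{\mathfrak p}$ is Cohen--Macaulay of dimension $\le 1$, and conclude indecomposability from canonical-module duality over the local ring $(R/I_d)_{\mathfrak p}$ (your appeal to $\End(\omega)\cong A$ is the same fact as the paper's $A_{\mathfrak p}\simeq K(K(A_{\mathfrak p}))$ argument, and your explicit grade computation justifying $K(R/I)\cong K(R/I_d)$ is a more careful version of the paper's ``without loss of generality $I=I_d$''). The only quibble is cosmetic: $I_d/I$ is supported on the lower-dimensional components of $I$ (not only embedded primes), but these all have height $>c$, which is the property you actually use.
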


\begin{proof}  Put $A = R/I.$ First note that $\Ass K(A) = \Ass R/I_d$ (cf. \cite{aG} resp. \cite[Lemma 1.9]{pS1}). Without loss of generality we may assume $I = I_d.$ Let $\mathfrak p \in \Supp K(A)$ denote a prime ideal with $t = \dim A_{\mathfrak p} \leq 1.$
Then $A_{\mathfrak p}$ is a Cohen-Macaulay ring of dimension $t$ and
$A_{\mathfrak p} \simeq K(K(A_{\mathfrak p}))$ (cf. for instance \cite[Theorem 1.14]{pS1}).
Moreover
\[
K(A_{\mathfrak p}) \simeq \Ext_{R_{\mathfrak p}}^c(A_{\mathfrak p},
R_{\mathfrak p}) \simeq K(A)\otimes_R R_{\mathfrak p},
\]
because of $c = \dim R_{\mathfrak p} - \dim A_{\mathfrak p}$ (cf.
\cite[Proposition 2.2]{pS2}).
Now suppose that $K(A_{\mathfrak p}) = U \oplus V$ is decomposable
with two non-trivial $A_{\mathfrak p}$-modules $U, V$ of dimension
$t.$ By passing to the canonical modules it provides a
decomposition
\[
A_{\mathfrak p} \simeq K(K(A_{\mathfrak p})) \simeq K(U) \oplus
K(V)
\]
of $A_{\mathfrak p}$ into two non-trivial $A_{\mathfrak
p}$-modules, which is absurd.
\end{proof}

Further examples of modules that are indecomposable in codimension one will be studied in the following sections.

\section{Ideal transforms}
Let $A$ denote a commutative ring and $I$ an ideal. For an $A$-module $M$ define
\[D_I(M) = \varinjlim \Hom_A(I^{\alpha}, M)\] the ideal transform of $M$ with respect to
$I.$ The homomorphisms in the direct system are induced by the inclusion. There are
a natural exact sequence
\[
0 \to H^0_I(M) \to M \to D_I(M) \to H^1_I(M) \to 0
\]
and isomorphisms $\operatorname{R}^iD_I(M) \simeq H^{i+1}_I(M)$ for all $i \geq 1.$
We refer to the book of Brodmann and Sharp (cf. \cite{BS}) for the details about this construction.

Recall that $D_I(A)$ admits the structure of a commutative ring, the ring of rational functions on $\Spec A \setminus V(I).$ Moreover, if $H^0_I(A) = 0,$ then
\[
D_I(A) = \{ q \in Q(A) | \Supp (Aq + A/A) \subset V(I) \},
\]
where $Q(A)$ denotes the full ring of quotients of $A.$ Next we summarize two additional
properties.

\begin{lemma} \label{3.1} Let $I, J$ be ideals of a commutative ring $A.$ Let $M$ denote an $A$-module. Then
\begin{itemize}
\item[(a)] There is an exact sequence, the Mayer-Vietoris sequence,
\[
0 \to D_{I + J}(M) \to D_I(M) \oplus D_J(M) \to D_{I \cap J}(M).
\]
\item[(b)] If $A$ is Noetherian, then $\Ass_A D_I(M) = \Ass_A M \setminus V(I).$
\end{itemize}
\end{lemma}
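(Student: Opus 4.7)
My plan is to derive both parts from the defining four-term exact sequence
\[
0 \to H^0_K(M) \to M \to D_K(M) \to H^1_K(M) \to 0
\]
(for $K = I,\,J,\,I+J,\,I\cap J$) together with standard facts about $I$-torsion and about the Mayer-Vietoris sequence for local cohomology, which is available in \cite{BS}. No serious obstacle is expected; the work lies in a careful diagram chase for (a) and in invoking the right vanishing for (b).

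For part (a), the cleanest route is the sheaf-theoretic one. Identifying $D_K(M)$ with $\Gamma(\Spec A \setminus V(K), \widetilde M)$, set $U = \Spec A \setminus V(I)$ and $V = \Spec A \setminus V(J)$. Since $V(I)\cap V(J)=V(I+J)$ and $V(I)\cup V(J)=V(I\cap J)$, we get $U\cup V = \Spec A \setminus V(I+J)$ and $U\cap V = \Spec A \setminus V(I\cap J)$. The ordinary Mayer--Vietoris sequence for the sheaf $\widetilde M$ on the union $U\cup V$, truncated in degree zero, then yields exactly the asserted four-term exact sequence with the restriction maps. If one prefers a purely algebraic derivation, one can instead start from the Mayer--Vietoris sequence in local cohomology
\[
0 \to H^0_{I+J}(M) \to H^0_I(M)\oplus H^0_J(M) \to H^0_{I\cap J}(M) \to H^1_{I+J}(M) \to \cdots
\]
and splice it with the four-term sequences above via the snake lemma; this yields the same conclusion.

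For part (b), I would first record the two well-known facts $\Ass_A H^0_I(M) = \Ass_A M \cap V(I)$ and consequently $\Ass_A(M/H^0_I(M)) = \Ass_A M \setminus V(I)$. Rewriting the four-term sequence as the short exact sequence
\[
0 \to M/H^0_I(M) \to D_I(M) \to H^1_I(M) \to 0,
\]
we immediately obtain the inclusion $\Ass_A M \setminus V(I) \subseteq \Ass_A D_I(M)$. For the reverse inclusion the key observation is that $H^0_I(D_I(M))=0$, i.e.\ $D_I(M)$ carries no nonzero $I$-torsion; this is standard and follows for instance from applying $H^0_I$ to the four-term sequence and using $H^0_I(M/H^0_I(M))=0$ together with the fact that $H^1_I(M)$ is $I$-torsion (hence the contribution from $H^1_I(M)$ becomes part of the kernel of $D_I(M)\to H^1_I(M)$ modulo the image). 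Once $D_I(M)$ has no $I$-torsion, no element of $\Ass_A D_I(M)$ can contain $I$, so $\Ass_A D_I(M)\subseteq \Spec A \setminus V(I)$. Combined with the short exact sequence, every associated prime of $D_I(M)$ is then an associated prime of $M/H^0_I(M)$, which gives $\Ass_A D_I(M) \subseteq \Ass_A M\setminus V(I)$ and completes the equality.

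The only mildly delicate point is verifying the vanishing $H^0_I(D_I(M))=0$ cleanly; one can either quote it directly from the Brodmann--Sharp treatment or deduce it from the fact that the natural map $D_I(M)\to D_I(D_I(M))$ is an isomorphism. Everything else is formal consequences of exact sequences and the behaviour of associated primes under submodules and quotients.
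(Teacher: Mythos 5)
Your proof is correct in substance, but part (a) follows a genuinely different route from the paper. The paper's proof of (a) is purely algebraic and very short: it applies $\Hom_A(-,M)$ to the natural short exact sequences $0 \to I^{\alpha}\cap J^{\alpha} \to I^{\alpha}\oplus J^{\alpha} \to I^{\alpha}+J^{\alpha} \to 0$ and passes to the direct limit, using cofinality of $\{I^{\alpha}+J^{\alpha}\}$ with the powers of $I+J$ (and likewise for the intersection). Your sheaf-theoretic argument via $D_K(M)\simeq\Gamma(\Spec A\setminus V(K),\widetilde M)$ is conceptually cleaner and makes the geometric content transparent, but note that the lemma's part (a) is stated for an \emph{arbitrary} commutative ring, while the Deligne-type identification of the ideal transform with sections over the open complement (and likewise the Mayer--Vietoris sequence for local cohomology that your second, ``splicing'' alternative relies on) is established in Brodmann--Sharp only in the Noetherian setting; the paper's elementary route avoids this hypothesis. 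Your alternative derivation by splicing the local-cohomology Mayer--Vietoris sequence with the four-term sequences is only sketched and would require checking commutativity of the relevant diagrams, so as written it is more of a plan than a proof. For part (b) the two arguments coincide in their essential point, namely that $\Gamma_I(D_I(M))=0$, i.e.\ $D_I(M)$ has no nonzero submodule supported in $V(I)$; the paper then finishes with a localization argument ($D_I(M)_{\mathfrak p}\simeq M_{\mathfrak p}$ for $\mathfrak p\notin V(I)$), whereas you finish by chasing associated primes through the short exact sequence $0\to M/\Gamma_I(M)\to D_I(M)\to H^1_I(M)\to 0$ together with $\Ass(M/\Gamma_I(M))=\Ass M\setminus V(I)$ and the $I$-torsionness of $H^1_I(M)$. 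Your bookkeeping there is slightly more explicit than the paper's; the parenthetical justification you offer for $\Gamma_I(D_I(M))=0$ is muddled, but you are right that it can simply be quoted from Brodmann--Sharp or deduced from $D_I(D_I(M))\simeq D_I(M)$, which is exactly what the paper does implicitly.
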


\begin{proof} The statement in (a) follows easily by the natural short exact sequences
\[
0 \to I^{\alpha} \cap J^{\alpha} \to I^{\alpha} \oplus J^{\alpha} \to I^{\alpha} + J^{\alpha} \to 0
\]
for all $\alpha \in \mathbb N$ (cf. \cite[Section 3]{BS} for the details).

For the proof of (b) let $\mathfrak p \in \Ass_A D_I(M).$ So there is an injection $A/\mathfrak p \to D_I(M).$ Then $\mathfrak p \not\in V(I)$ because any submodule
of $D_I(M)$ with support in $V(I)$ is zero. The reverse inclusion is easily seen by a localization argument.
\end{proof}

In the following we prove a result that seems to be well-known -- at least in algebraic geometry. We have not been able to find an appropriate reference in commutative algebra.

\begin{theorem} \label{3.2} Let $A$ denote a Noetherian ring. Let $I$ be a proper ideal of $A.$
Then the following conditions are equivalent:
\begin{itemize}
\item[(i)] $D_I(A)$ is indecomposable as an $A$-module.
\item[(ii)] $\Spec A \setminus V(I)$ is connected.
\end{itemize}
\end{theorem}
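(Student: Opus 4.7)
The plan is to prove both implications by combining Proposition~\ref{2.1} with the Mayer--Vietoris sequence of Lemma~\ref{3.1}.

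For the implication (ii)$\Rightarrow$(i), I would apply Proposition~\ref{2.1} to the $A$-module $M = D_I(A)$ with the specialization-stable set $T = V(I)$. Hypothesis~(a) is immediate from Lemma~\ref{3.1}(b), which gives $\Ass_A D_I(A) = \Ass_A A \setminus V(I)$. For hypotheses~(b) and (c), observe that for $\mathfrak{p} \notin V(I)$, localizing the fundamental sequence
\[
0 \to H^0_I(A) \to A \to D_I(A) \to H^1_I(A) \to 0
\]
at $\mathfrak{p}$ kills the outer terms, since $I A_{\mathfrak{p}} = A_{\mathfrak{p}}$, so $D_I(A)_{\mathfrak{p}} \simeq A_{\mathfrak{p}}$. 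Therefore $\Supp_A D_I(A) \setminus V(I) = \Spec A \setminus V(I)$, which is connected by assumption (giving~(b)), and each stalk $D_I(A)_{\mathfrak{p}} \simeq A_{\mathfrak{p}}$ is a local ring, hence indecomposable as a module over itself (its only idempotents are $0$ and $1$), giving~(c). Proposition~\ref{2.1} then forces the indecomposability of $D_I(A)$.

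For the converse I would argue contrapositively. Suppose $\Spec A \setminus V(I) = U_1 \sqcup U_2$ with both $U_i$ nonempty and open in the subspace topology; since $\Spec A \setminus V(I)$ is itself open in $\Spec A$, each $U_i$ is open in $\Spec A$, so write $U_i = \Spec A \setminus V(K_i)$ for ideals $K_1, K_2$. The two disjoint-union conditions translate into
\[
\sqrt{K_1 + K_2} = \sqrt{I} \quad\text{and}\quad K_1 \cap K_2 \subseteq \sqrt{0}.
\]
I would then feed $K_1, K_2$ into the Mayer--Vietoris sequence from Lemma~\ref{3.1}(a), relying on two basic facts about the transform: $D_J(A)$ depends only on $\sqrt{J}$ (the directed systems $\{J^{\alpha}\}$ and $\{I^{\alpha}\}$ are mutually cofinal when $\sqrt{J} = \sqrt{I}$), so the leftmost term is $D_{K_1 + K_2}(A) = D_I(A)$; and $D_J(A) = 0$ whenever $J \subseteq \sqrt{0}$, since in a Noetherian ring the nilradical is nilpotent, so $J^{\alpha} = 0$ for $\alpha \gg 0$ and the direct limit of $\Hom(J^{\alpha}, A)$ vanishes. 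This collapses the sequence to an isomorphism
\[
D_I(A) \simeq D_{K_1}(A) \oplus D_{K_2}(A),
\]
and both summands are nonzero because $V(K_i) \neq \Spec A$: at any $\mathfrak{p} \notin V(K_i)$ the localization is $A_{\mathfrak{p}} \neq 0$. This nontrivial decomposition contradicts~(i).

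The main obstacle is not a deep one, but rather the careful translation between topology and ideal theory: extracting the pair $K_1, K_2$ from the disconnection with the correct radical relations, and confirming the two routine but unstated properties of the ideal transform (invariance under radicals, vanishing on nilpotent ideals) needed to collapse Mayer--Vietoris. Once these are in hand, the proof reduces to a direct application of Proposition~\ref{2.1} in one direction and Lemma~\ref{3.1}(a) in the other.
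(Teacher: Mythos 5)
Your argument is correct and follows essentially the same route as the paper: Proposition~\ref{2.1} with $T = V(I)$ and the stalk computation $D_I(A)_{\mathfrak p}\simeq A_{\mathfrak p}$ for (ii)$\Rightarrow$(i), and the Mayer--Vietoris sequence of Lemma~\ref{3.1}(a) applied to the two ideals cutting out the disconnection for (i)$\Rightarrow$(ii). You in fact supply slightly more detail than the paper (radical invariance of the transform, vanishing on nilpotent ideals, nonvanishing of the summands via localization), all of which is accurate.
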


\begin{proof}
First we show the implication (i) $ \Longrightarrow $ (ii). Suppose that $\Spec A \setminus V(I)$ is disconnected. Then there are two proper ideals $I_i, i = 1,2,$ of $A$ such that
\begin{itemize}
\item $\Spec A \setminus V(I_i), i = 1,2,$ are disjoint and non-empty, and
\item $\Spec A \setminus V(I) = (\Spec A \setminus V(I_1)) \cup (\Spec A \setminus V(I_2)).$
\end{itemize}
Reading this two conditions in terms of the ideals it follows that  $\Rad I = \Rad (I_1 + I_2)$ and $I_1 \cap I_2$  is nilpotent. The Mayer-Vietoris sequence (cf.
Lemma \ref{3.1}) gives us an exact sequence
\[
0 \to D_I(A) \to D_{I_1}(A) \oplus D_{I_2}(A) \to D_{I_1 \cap I_2}(A) = 0.
\]
Now recall that $D_{I_1 \cap I_2}(A) = 0$ because $I_1 \cap I_2$ is nilpotent. Moreover
$D_{I_i}(A) \not= 0$ because $I_i, i = 1,2,$ is not nilpotent. So we have arrived at a contradiction.

For the prove of (ii) $ \Longrightarrow $ (i) first observe that we may assume $H^0_I(A) = 0$
as easily seen. For the proof we shall apply Proposition \ref{2.1}. In order to do so put $T = V(I).$ In order to complete the assumption we have to show that $D_I(A)\otimes A_{\mathfrak p}$ is indecomposable for all $\mathfrak p \in \Spec A \setminus V(I).$ Recall that
$\Supp_A D_I(A) = \Spec A \setminus V(I).$ But $D_I(A)\otimes A_{\mathfrak p} \simeq A_{\mathfrak p}$ for all $\mathfrak p \in \Spec A \setminus V(I).$
\end{proof}

As an application there is the following result originally shown by Hartshorne
(cf. \cite{rH}). We observe that $D_I(A) \simeq A$ if and only if $\grade I \geq 2.$

\begin{corollary} \label{3.3} Suppose that $\grade I \geq 2.$ Then $\Spec A \setminus V(I)$ is connected.
\end{corollary}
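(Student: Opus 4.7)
The plan is to reduce to Theorem~\ref{3.2} via the parenthetical observation made just before the corollary, namely that $D_I(A) \simeq A$ exactly when $\grade I \geq 2$. Once that identification is in hand, the corollary falls out immediately.

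First I would verify the observation itself. Starting from the fundamental four-term exact sequence
\[
0 \to H^0_I(A) \to A \to D_I(A) \to H^1_I(A) \to 0
\]
recorded at the beginning of Section~3, I combine it with the classical identity $\grade I = \inf\{\,i \mid H^i_I(A) \neq 0\,\}$ valid for a Noetherian ring $A$ and an ideal $I$. The hypothesis $\grade I \geq 2$ forces $H^0_I(A) = H^1_I(A) = 0$, so the canonical map $A \to D_I(A)$ becomes an isomorphism.

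With $D_I(A) \simeq A$ in hand, the remainder is a one-line application. Under the (implicit) hypothesis that $\Spec A$ is connected --- which must be present, since otherwise $\Spec A \setminus V(I)$ would inherit the disconnectedness of $\Spec A$ --- the ring $A$ has no nontrivial idempotents, hence is indecomposable as a module over itself, and therefore so is $D_I(A)$. Theorem~\ref{3.2} then yields at once that $\Spec A \setminus V(I)$ is connected. The only conceptual point to watch is the grade-theoretic translation at the beginning; once that is granted, no genuine obstacle remains and the corollary is essentially tautological from Theorem~\ref{3.2}.
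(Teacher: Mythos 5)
Your proof is correct and follows exactly the route the paper intends: the paper gives no explicit argument but points to the observation that $D_I(A)\simeq A$ when $\grade I\geq 2$ (which you justify via the four-term sequence and the characterization of grade by vanishing of local cohomology) combined with Theorem~\ref{3.2}. Your remark that one must implicitly assume $\Spec A$ connected (e.g.\ $A$ local, as in Hartshorne's original setting) for $A$ to be indecomposable over itself is a legitimate and correctly handled point.
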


\section{Endomorphism rings of local cohomology}
Let $(A, \mathfrak m)$ denote a local ring. Let $I \subset A$ be an ideal and
$c = \grade I.$ In the following we need some technical results about the
endomorphism ring $\Hom_A(H^c_I(A), H^c_I(A)).$

\begin{lemma} \label{4.1} There are the following natural isomorphisms
\[
\Hom_A(H^c_I(A), H^c_I(A)) \simeq \Ext_A^c(H^c_I(A), A) \simeq \varprojlim
\Ext_A^c(\Ext_A^c(A/I^{\alpha}, A), A).
\]
If $A$ is a Gorenstein ring, then the endomorphism ring $\Hom_A(H^c_I(A), H^c_I(A))$ is commutative.
\end{lemma}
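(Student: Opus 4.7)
The plan is to reduce everything to a single auxiliary fact: for every $I$-torsion $A$-module $M$,
\[
\Hom_A(M, H^c_I(A)) \simeq \Ext_A^c(M, A).
\]
I would deduce this from the isomorphism $\RHom_A(M, A) \simeq \RHom_A(M, \mathrm{R}\Gamma_I(A))$, which holds because $M$ is $I$-torsion, together with the hypercohomology spectral sequence
\[
E_2^{p,q} = \Ext_A^p(M, H^q_I(A)) \Rightarrow \Ext_A^{p+q}(M, A).
\]
Since $c = \grade I$ forces $H^q_I(A) = 0$ for $q < c$, the only potentially nonzero entry on the diagonal $p + q = c$ is $E_2^{0,c} = \Hom_A(M, H^c_I(A))$, and its incoming and outgoing differentials vanish automatically for landing in zero columns. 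Hence $E_\infty^{0,c} = E_2^{0,c}$, producing the claimed isomorphism.

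Granted this auxiliary fact, the first isomorphism of the lemma follows by applying it to $M = H^c_I(A)$, which is itself $I$-torsion. For the second isomorphism, I would write $H^c_I(A) = \varinjlim_\alpha \Ext_A^c(A/I^\alpha, A)$ and use the fact that $\Hom_A(-, N)$ converts direct limits in its first argument into inverse limits:
\[
\Hom_A(H^c_I(A), H^c_I(A)) \simeq \varprojlim_\alpha \Hom_A\bigl(\Ext_A^c(A/I^\alpha, A),\, H^c_I(A)\bigr).
\]
Each $\Ext_A^c(A/I^\alpha, A)$ is annihilated by $I^\alpha$, hence $I$-torsion, so applying the auxiliary fact inside the limit replaces each $\Hom$-term by $\Ext_A^c(\Ext_A^c(A/I^\alpha, A), A)$, yielding the claimed $\varprojlim$ description.

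For the commutativity assertion in the Gorenstein case, we have $c = \grade I = \height I$, and so $K_\alpha := \Ext_A^c(A/I^\alpha, A)$ is essentially the canonical module of $A/I^\alpha$. Biduality for canonical modules over a Gorenstein ring (in the spirit of \cite[Theorem 1.14]{pS1}) identifies $\Ext_A^c(K_\alpha, A)$ with the quotient of $A$ by the intersection of the top-dimensional primary components of $I^\alpha$, which is a commutative $A$-algebra. The transition maps in the inverse system are ring homomorphisms, so the inverse limit $\varprojlim_\alpha \Ext_A^c(K_\alpha, A)$ is a commutative ring, and this commutative structure transports across the natural isomorphisms of the first part back to $\Hom_A(H^c_I(A), H^c_I(A))$.

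The hard part will not be the abelian-group isomorphisms but the compatibility of ring structures: one must verify that the composition product on the endomorphism ring is carried, through the entire chain of natural isomorphisms, to the commutative multiplication on the limit of quotients of $A$. This amounts to careful tracking of naturality in the hypercohomology spectral sequence used for the auxiliary fact, rather than any new computation.
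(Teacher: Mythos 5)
Your auxiliary fact and its spectral-sequence proof are correct, and they are the derived-category form of what the paper does by hand: the paper takes a minimal injective resolution $A \qism E^{\cdot}$, uses that $\Hom_A(X, E_A(A/\mathfrak p)) = 0$ for $I$-torsion $X$ and $\mathfrak p \notin V(I)$ to replace $E^{\cdot}$ by $\Gamma_I(E^{\cdot})$, and uses $\Gamma_I(E^{\cdot})^i = 0$ for $i < c$ (Matlis' structure theory, $c = \grade I$) to read off $\Ext_A^c(H^c_I(A),A) \simeq \Hom_A(H^c_I(A),H^c_I(A))$ from a two-row commutative diagram. Your handling of the second isomorphism is in fact more careful than the paper's: the paper simply asserts that $\Ext_A^{\cdot}(\cdot,A)$ turns the direct system $\Ext_A^c(A/I^{\alpha},A)$ into an inverse system and passes to the limit, which for higher $\Ext$ is not automatic; your route through $\Hom_A(\varinjlim -, H^c_I(A)) \simeq \varprojlim \Hom_A(-, H^c_I(A))$ followed by a termwise application of the auxiliary fact (each $\Ext_A^c(A/I^{\alpha},A)$ being $I^{\alpha}$-torsion) is the clean way to get it. The only thing to add there is the naturality of the auxiliary isomorphism in $M$, so that the termwise identifications commute with the transition maps; that is routine from your spectral-sequence construction.

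The commutativity part, however, contains a genuine misidentification. For $K_{\alpha} = \Ext_A^c(A/I^{\alpha},A)$ over a Gorenstein $A$, the module $\Ext_A^c(K_{\alpha},A)$ is \emph{not} the quotient of $A$ by the intersection of the top-dimensional primary components of $I^{\alpha}$; by the biduality invoked in Section 6 of the paper it is $\Hom_A(K_{\alpha},K_{\alpha})$, i.e.\ the $S_2$-fication of that quotient, which contains it as a subring but is in general strictly larger and need not be a quotient of $A$ at all (only a finite birational extension of one). What survives of your argument is exactly what the paper uses: each term is naturally a commutative ring (the endomorphism ring of a canonical module) and the transition maps are ring homomorphisms, so the inverse limit is a commutative ring; the paper proves commutativity by citing \cite[Theorem 3.1]{pS3} and remarks that the endomorphism ring is an inverse limit of such commutative rings. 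So replace your description of the individual rings by the endomorphism-ring/$S_2$-fication description, and supply (or cite) the commutativity of $\Hom_A(K_{\alpha},K_{\alpha})$; the compatibility of the composition product with the limit multiplication, which you correctly flag as the remaining work, is then the same verification the paper delegates to \cite{pS3}.
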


\begin{proof} By definition $H^c_I(A) \simeq \varinjlim \Ext_A^c(A/I^{\alpha}, A),$  so that
\[
 \Ext_A^c(H^c_I(A), A) \simeq \varprojlim \Ext_A^c(\Ext_A^c(A/I^{\alpha}, A), A).
\]
Now we recall that $\Ext_A^{\cdot}(\cdot,A)$ transforms a direct system into an inverse system. Let $A \qism E^{\cdot}$ denote a minimal injective resolution. Then there is an exact sequence $0 \to H^c_I(A) \to \gam(E^{\cdot})^c \to \gam(E^{\cdot})^{c+1}.$ To this end recall that $\gam(E^{\cdot})^i = 0$ for all $i < c.$ This is a consequence of the Matlis structure
theory about injective modules. It induces a natural commutative diagram with exact rows
\[
  \begin{array}{cccccc}
    0 \to & \Hom_A(H^c_I(A), H^c_I(A)) & \to & \Hom_A(H^c_I(A),\gam(E^{\cdot}))^c  & \to  & \Hom_A(H^c_I(A),\gam(E^{\cdot}))^{c+1} \\
      &     \downarrow             &      & \downarrow                          &      & \downarrow \\
    0 \to & \Ext_A^c(H^c_I(A), A)        & \to  & \Hom_A(H^c_I(A), E^{\cdot})^c       & \to  & \Hom_A(H^c_I(A), E^{\cdot})^{c+1} \\
  \end{array}
\]
because $\gam(E^{\cdot})$ is a subcomplex of $E^{\cdot}.$ The two last vertical homomorphisms
are isomorphisms. This follows because $\Hom_A(X, E_A(A/\mathfrak p)) = 0$ for an $A$-module $X$ with $\Supp_A X \subset V(I)$ and $\mathfrak p \not\in V(I).$ Therefore the first vertical map is also an isomorphism.

It is shown (cf. \cite[Theorem 3.1]{pS3} that $\Hom_A(H^c_I(A), H^c_I(A))$ is commutative
provided $A$ is a Gorenstein ring. In fact it is the inverse limit of commutative rings (cf. \ref{6.2}).
\end{proof}

\begin{remark} \label{4.2} While $\Hom_A(H^c_I(A), H^c_I(A))$ is a commutative ring provided
$A$ is a Gorenstein we do not know whether this is true in general. It is an open
problem whether the endomorphisms ring of $H^c_I(A), c = \height I,$ is a Noetherian ring
in the case $A$ a Gorenstein ring. It is a finitely generated $A$-module and
therefore Noetherian provided $(A, \mathfrak m)$ is a regular local ring containing a
field. See the discussion in Section 3 and 4 of the paper \cite{pS3}.
\end{remark}

In the following we shall give another partial affirmative answer to the question whether
the endomorphism ring of $H^c_I(R)$ is Noetherian.  For an ideal $I$ of a Noetherian ring $A$ define $u_A(I)$ the intersection of all
the $\mathfrak p$-primary components of the zero ideal with the property $\dim A/(I + \mathfrak p) > 0.$

\begin{lemma} \label{4.3} Let $(R, \mathfrak m)$ denote an $n$-dimensional Gorenstein ring. Let $I \subset R$ be an ideal such that $\dim R/I = 1.$ Then
\[
\hat R^I/u_{\hat R^I}(I) \simeq \Hom_R(H^c_I(R), H^c_I(R)),
\]
where $\hat R^I$ denotes the $I$-adic completion of $R.$  In particular, $\Hom_R(H^c_I(R), H^c_I(R))$ is a local Noetherian ring.
\end{lemma}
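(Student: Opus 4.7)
The plan is to evaluate the inverse-limit formula from Lemma~\ref{4.1} explicitly, using the dimension hypothesis $\dim R/I = 1$. By that lemma,
\[
\Hom_R(H^c_I(R), H^c_I(R)) \simeq \varprojlim_\alpha \Ext_R^c(\Ext_R^c(R/I^\alpha, R), R).
\]
Since $R$ is Gorenstein of dimension $n = c+1$ and $\dim R/I^\alpha = 1$, each $K_\alpha := \Ext_R^c(R/I^\alpha, R)$ is the canonical module of $R/I^\alpha$; moreover $\Ass K_\alpha = \Assh(R/I^\alpha)$ consists only of $1$-dimensional primes, so $K_\alpha$ is itself Cohen--Macaulay. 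The biduality for canonical modules (cf.~\cite[Theorem 1.14]{pS1}) then yields an exact sequence
\[
0 \to H^0_{\mathfrak m}(R/I^\alpha) \to R/I^\alpha \to \Ext_R^c(K_\alpha, R) \to 0.
\]
Since the only associated primes of $R/I^\alpha$ outside $\Assh(R/I^\alpha)$ are $\mathfrak m$, the $\mathfrak m$-torsion submodule coincides with $u_R(I^\alpha)/I^\alpha$, where $u_R(I^\alpha)$ denotes the intersection of the $1$-dimensional primary components of $I^\alpha$. Hence $\Ext_R^c(K_\alpha, R) \simeq R/u_R(I^\alpha)$ and
\[
\Hom_R(H^c_I(R), H^c_I(R)) \simeq \varprojlim_\alpha R/u_R(I^\alpha).
\]

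Next I would take the inverse limit of these short exact sequences. Since each $H^0_{\mathfrak m}(R/I^\alpha)$ has finite length, the inverse system satisfies the Mittag--Leffler condition, so $\varprojlim{}^1 H^0_{\mathfrak m}(R/I^\alpha) = 0$ and
\[
0 \to \varprojlim_\alpha H^0_{\mathfrak m}(R/I^\alpha) \to \hat R^I \to \varprojlim_\alpha R/u_R(I^\alpha) \to 0
\]
is exact. It remains to identify the kernel with $u_{\hat R^I}(I)$. Here $\hat R^I$ is local with maximal ideal $\mathfrak m \hat R^I$ (since $R$ is local and $I \subset \mathfrak m$) and $\dim \hat R^I/I\hat R^I = 1$. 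By definition $u_{\hat R^I}(I)$ is the intersection of those primary components of $(0) \subset \hat R^I$ whose associated prime $\mathfrak p$ satisfies $\dim \hat R^I/(I\hat R^I + \mathfrak p) > 0$, i.e.~the components that are not $\mathfrak m \hat R^I$-primary. Using flat base change of the primary decompositions of $I^\alpha$ along $R \to \hat R^I$, this intersection matches $\varprojlim H^0_{\mathfrak m}(R/I^\alpha)$, giving
\[
\hat R^I/u_{\hat R^I}(I) \simeq \Hom_R(H^c_I(R), H^c_I(R)).
\]

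The final assertion is then immediate: $\hat R^I$ is Noetherian as the $I$-adic completion of a Noetherian ring, and local because $I \subset \mathfrak m$, so its quotient $\hat R^I/u_{\hat R^I}(I)$ is local Noetherian. I expect the main obstacle to be the identification of $\varprojlim H^0_{\mathfrak m}(R/I^\alpha)$ with $u_{\hat R^I}(I)$: unlike the $\mathfrak m$-adic completion, $\hat R^I$ need not be $\mathfrak m$-adically complete, so one must carefully track the primary decomposition of $(0)$ in $\hat R^I$ via flat base change rather than relying on the simpler $\mathfrak m$-adic picture.
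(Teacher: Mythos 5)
Your argument is correct and arrives at the same short exact sequence
$0 \to \varprojlim H^0_{\mathfrak m}(R/I^{\alpha}) \to \hat R^I \to \Hom_R(H^c_I(R),H^c_I(R)) \to 0$
as the paper, but by a genuinely different route. The paper works with the truncation complex $C^{\cdot}_R(I)$ of Hellus and Schenzel: since $\dim R/I=1$ forces $H^i(C^{\cdot}_R(I))=0$ for $i\neq n$, their Lemma 2.4 produces the four-term sequence
$0 \to \Ext^n_R(H^n_I(R),R) \to \hat R^I \to \Ext^c_R(H^c_I(R),R) \to \Ext^{n+1}_R(H^n_I(R),R) \to 0$,
whose last term vanishes because $\injdim R = n$, and local duality identifies the first term with $\varprojlim H^0_{\mathfrak m}(R/I^{\alpha})$. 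You instead evaluate the inverse system of Lemma \ref{4.1} term by term: biduality for the one-dimensional Cohen--Macaulay quotients $R/u_R(I^{\alpha})$ gives $\Ext^c_R(\Ext^c_R(R/I^{\alpha},R),R)\simeq R/u_R(I^{\alpha})$, and Mittag--Leffler (valid because the modules $H^0_{\mathfrak m}(R/I^{\alpha})$ have finite length, so the chains of images stabilize) lets you pass to the limit. Your version is more elementary -- it stays at the level of finitely generated modules and avoids the truncation complex -- at the price of being special to $\dim R/I=1$, which is all that is needed here. Both proofs end with the identification $\varprojlim H^0_{\mathfrak m}(R/I^{\alpha}) = u_{\hat R^I}(I)$; the paper defers this to Lemma 4.1 of \cite{pS4} ``with a slight modification,'' and your flat-base-change sketch is no less complete than that citation. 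One small inaccuracy: your gloss that the components of $(0)$ entering $u_{\hat R^I}(I)$ are exactly those that are not $\mathfrak m\hat R^I$-primary does not match the paper's definition -- a non-maximal associated prime $\mathfrak P$ of $\hat R^I$ can still satisfy $\dim \hat R^I/(I\hat R^I+\mathfrak P)=0$ -- but this does not affect the structure of your argument.
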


\begin{proof} Let $R \qism E^{\cdot}$ denote a minimal injective resolution of $R.$ We use the notion of the truncation complex $C_R^{\cdot}(I)$ (cf. \cite[Section 2]{HeS}). That is, there is an exact sequence
\[
0 \to H^c_I(R)[-c] \to \Gamma_I(E^{\cdot}) \to C_R^{\cdot}(I) \to 0,
\]
where $c = \height I.$ Because of $H^i(C_R^{\cdot}(I)) = 0$ for all $i \not= n$ there is an exact sequence
\[
0 \to \Ext^n_R(H^n_I(R), R) \to {\hat R}^I \to \Ext^c_R(H^c_I(R), R) \to \Ext^{n+1}_R(H^n_I(R), R) \to 0
\]
(cf. \cite[Lemma 2.4]{HeS}). Because $R$ is a Gorenstein ring $\operatorname{injdim} R = n$
so that the last $\Ext$-module in the exact sequence vanishes. Because of the Local Duality Theorem and Matlis duality there are the following isomorphisms
\[
\Ext^n_R(H^n_I(R), R) \simeq \varprojlim \Ext^n_R(\Ext^n_R(R/I^{\alpha}, R),R) \simeq \varprojlim H^0_{\mathfrak m}(R/I^{\alpha}).
\]
Finally $\varprojlim H^0_{\mathfrak m}(R/I^{\alpha}) = u_{\hat R^I}(I)$ follows by
\cite[Lemma 4.1]{pS4} with a slight modification of the arguments.
\end{proof}

\section{The connectedness graph}
For our purposes here we shall modify the definition of an undirected graph introduced
originally by Hochster and Huneke (cf. \cite[(3.4)]{HH}) and Lyubeznik (cf. \cite{gL2}).

\begin{definition} \label{5.1} Let $I$ denote an ideal of a Noetherian ring $A.$  Let $\mathbb  G(I)$ denote the following graph:
\begin{itemize}
\item The vertices of $\mathbb G(I)$ are the minimal prime ideals $\mathfrak p \in \Spec A \setminus V(I).$
\item Two vertices $\mathfrak p, \mathfrak q \in \Spec A \setminus V(I)$ are connected by an edge if there is a prime ideal $\mathfrak P \in \Spec A \setminus V(I)$ such that $\mathfrak p + \mathfrak q \subset \mathfrak P.$
\end{itemize}
\end{definition}

It is easily seen by the Definition \ref{5.1} that $\Spec A \setminus V(I)$ is connected if and
only if $\mathbb G(I)$ is connected. If $\mathbb G(I)$ is not connected let $\mathbb G_i, i = 1,\ldots, t,$ denote its connected components. Let $U_i, i = 1,\ldots, t$ denote the intersection of all $\mathfrak p$-primary components of the zero ideal of $A$ such that $\mathfrak p$ belongs to $\mathbb G_i.$ Clearly we have that $H^0_I(A) = \cap_{i=1}^t U_i.$

\begin{theorem} \label{5.2} Let $A$ be a Noetherian ring. Let $I$ denote an ideal of $A.$ With the previous notation there is a natural isomorphism
\[
D_I(A) \simeq \oplus_{i=1}^t D_I(A/U_i).
\]
Moreover $D_I(A/U_i), i = 1,\ldots, t,$ are indecomposable $A$-modules.
\end{theorem}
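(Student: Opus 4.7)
The plan is to establish the decomposition in three stages. \textbf{First}, I would reduce to the case $H^0_I(A) = 0$. From the canonical four term exact sequence
$$0 \to H^0_I(A) \to A \to D_I(A) \to H^1_I(A) \to 0$$
and the vanishing $D_I(N) = 0$ for any $I$-torsion $N$ (which follows from $H^i_I(N) = 0$ for $i \geq 1$ when $N = \Gamma_I(N)$), one obtains $D_I(A) \simeq D_I(A/H^0_I(A))$. Under the given identification $H^0_I(A) = \cap_{i=1}^t U_i$, it then suffices to analyse $D_I(A/\cap U_i)$.

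\textbf{Second}, I would produce the splitting via the diagonal short exact sequence
$$0 \to A/\cap_{i=1}^t U_i \to \oplus_{i=1}^t A/U_i \to C \to 0.$$
The crucial claim is that $\Supp_A C \subseteq V(I)$. Stalkwise, the diagonal is an isomorphism at every prime $\mathfrak{p}$ containing at most one of the $U_i$, so $C_\mathfrak{p} = 0$ unless $\mathfrak{p} \in V(U_i) \cap V(U_j) = V(U_i + U_j)$ for some $i \neq j$. Any such $\mathfrak{p}$ contains the intersection $U_i = \cap_{\mathfrak{q} \in \mathbb{G}_i} Q_{\mathfrak{q}}$ of primary ideals, and therefore contains some $Q_{\mathfrak{q}}$, hence its radical $\mathfrak{q} \in \mathbb{G}_i$; symmetrically, $\mathfrak{p}$ contains some $\mathfrak{q}' \in \mathbb{G}_j$. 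If $\mathfrak{p}$ were outside $V(I)$, this would supply an edge of $\mathbb{G}(I)$ joining two distinct connected components, a contradiction. Since $C$ is finitely generated and supported in $V(I)$, it is $I$-torsion, so $D_I(C) = 0$. Applying the left exact functor $D_I$ and using that it commutes with finite direct sums then yields
$$D_I(A) \simeq D_I(A/\cap U_i) \simeq \oplus_{i=1}^t D_I(A/U_i).$$

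\textbf{Third}, I would prove indecomposability of each summand by invoking Theorem \ref{3.2} for the ring $A/U_i$. The minimal primes of $A/U_i$ outside $V(I)$ are exactly the vertices of $\mathbb{G}_i$, and for $\mathfrak{p}, \mathfrak{q} \in \mathbb{G}_i$ the edges of the graph attached to $A/U_i$ and $I$ coincide with those of $\mathbb{G}(I)$, because any prime of $A$ above such a $\mathfrak{p}$ automatically contains $U_i$. Hence the relevant graph is just $\mathbb{G}_i$, which is connected by construction, so $\Spec(A/U_i) \setminus V(I)$ is connected by the observation at the beginning of Section 5, and Theorem \ref{3.2} forces $D_I(A/U_i)$ to be indecomposable.

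The step I expect to require the most care is the stalkwise verification that $\Supp C \subseteq V(I)$, since this is the exact point at which the combinatorial assumption on $\mathbb{G}(I)$ is fed into the homological calculation; everything else is essentially formal once that ingredient is secured.
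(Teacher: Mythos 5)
Your proposal is correct and follows essentially the same route as the paper: the decisive point in both is that the cokernel of the diagonal map $A/\cap U_i \to \oplus A/U_i$ is supported in $V(I)$, because a prime outside $V(I)$ containing $U_i+U_j$ would contain a vertex of $\mathbb G_i$ and one of $\mathbb G_j$ and hence join two distinct components, after which left exactness of $D_I$ gives the splitting and Theorem \ref{3.2} applied to $A/U_i$ gives indecomposability. The only (cosmetic) difference is that the paper organizes the splitting as an induction on $t$, peeling off $U_1$ from $U=\cap_{i\geq 2}U_i$ via the short exact sequence $0\to A/U_1\cap U\to A/U_1\oplus A/U\to A/(U_1+U)\to 0$, whereas you do it in one step with the full diagonal sequence.
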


\begin{proof} We may assume $H^0_I(A) = 0$ without loss of generality. We put $U_0 = 0$ and proceed by an induction on $t$ in order to show the isomorphism of the statement. If $t = 1$ there is nothing to show. Let $U = \cap_{i = 2}^t U_i.$ Then $U_0 = U_1 \cap U.$ So, there is a natural short exact sequence
\[
0 \to A/U_0 \to A/U_1 \oplus A/U \to A/(U_1 + U) \to 0.
\]
By applying the section functor $D_I(\cdot)$ it induces an exact sequence
\[
0 \to D_I(A/U_0) \to D_I(A/U_1) \oplus D_I(A/U) \to D_I(A/(U_1 + U)).
\]
Now it is easily seen that $V(U_1 + U) = \cup_{i = 2}^t V(U_1 + U_i).$ For $i \not= 1$ the subgraphs $\mathbb G_1$ and $\mathbb G_i$ are not connected. Therefore any prime ideal $\mathfrak P \in V(U_1 + U_i), i = 2,\ldots, t,$ contains $I$ as follows by the definition. Therefore $V(U_1 + U) \subseteq V(I)$
and $D_I(A/U_1 + U)) = 0.$ This completes the inductive step.

Put $A_i = A/U_i, i = 1,\ldots,t.$ Then $\Spec A_i \setminus V(I A_i), i = 1,\ldots, t,$ is connected. Therefore, $D_I(A_i)$ is indecomposable (cf. Theorem \ref{3.2}).
\end{proof}

As a final point of this subsection we shall describe a situation when $D_I(A)$ is a local ring.

\begin{corollary} \label{5.3} Let $(A,\mathfrak m)$ denote a complete local ring. Suppose that
$\height (I + \mathfrak p)/\mathfrak p > 1$ for all $\mathfrak p \in \Ass A.$ Then $D_I(A)$ is a local ring if and only if $\Spec A \setminus V(I)$ is connected.
\end{corollary}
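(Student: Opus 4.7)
The plan is to handle the two implications separately, with the forward direction being essentially immediate from the direct-sum decomposition in Theorem~\ref{5.2}. Suppose $D_I(A)$ is local. Then it admits no non-trivial idempotents; but a non-trivial decomposition $D_I(A) \cong \bigoplus_{i=1}^{t} D_I(A/U_i)$ (where $t$ is the number of connected components of $\mathbb G(I)$, equivalently of $\Spec A \setminus V(I)$) would produce such idempotents via the projections onto the summands. Hence $t=1$, and $\Spec A \setminus V(I)$ is connected.

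For the harder converse, assume $\Spec A \setminus V(I)$ is connected. Theorem~\ref{3.2} already gives that $D_I(A)$ is indecomposable as an $A$-module, and the main task is to upgrade this module-theoretic indecomposability to a local ring structure. My plan is to first show that $D_I(A)$ is a finitely generated $A$-module. The height hypothesis forces, in particular, that no associated prime of $A$ contains $I$, so $H^0_I(A) = 0$, and the defining exact sequence
\[
0 \to A \to D_I(A) \to H^1_I(A) \to 0
\]
reduces finite generation of $D_I(A)$ to that of $H^1_I(A)$. For the latter, I would invoke Faltings' finiteness theorem: the condition $\height(I+\mathfrak p)/\mathfrak p > 1$ for every $\mathfrak p \in \Ass A$ is exactly of the form that forces the finiteness dimension $f_I(A)$ to exceed $1$, so $H^1_I(A)$ is finitely generated.

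Once $D_I(A)$ is known to be a finite commutative $A$-algebra, one exploits that $A$ is complete, hence Henselian: every finite commutative algebra over a Henselian local ring decomposes as a finite direct product of local Henselian algebras, say $D_I(A) \cong B_1 \times \cdots \times B_r$. This product is simultaneously a decomposition of $D_I(A)$ as an $A$-module. By the indecomposability already established from Theorem~\ref{3.2}, only one factor can be non-zero, so $r = 1$ and $D_I(A)$ is local.

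The main obstacle is the finiteness step for $H^1_I(A)$. The hypothesis is phrased in terms of heights relative to associated primes rather than grade, so one cannot simply shortcut via $\grade I \geq 2$ (which would give the strictly stronger statement $D_I(A) \simeq A$); the non-Cohen--Macaulay case must really be handled by a Faltings-type argument, or alternatively by a Matlis-duality computation using the dualizing complex that is available because $A$ is complete. Completeness therefore plays a double role in the proof: in securing the finiteness of $H^1_I(A)$, and in enabling the Henselian decomposition that turns module-indecomposability into locality as a ring.
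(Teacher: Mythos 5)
Your proof is correct and takes essentially the same route as the paper's: Theorem \ref{3.2} for module-indecomposability, the Grothendieck/Faltings finiteness theorem (which the paper invokes as ``the Grothendieck finiteness result'' to conclude directly that $D_I(A)$ is a finitely generated $A$-module) using the height hypothesis, and the fact that a finite algebra over a complete (Henselian) local ring splits into local factors matching its indecomposable $A$-module components. Your detour through $H^1_I(A)$ via the four-term exact sequence and your separate idempotent argument for the forward direction via Theorem \ref{5.2} are only cosmetic variations on the paper's two citations.
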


\begin{proof} By virtue of Theorem \ref{3.2} it will be enough to prove that -- under the additional assumptions -- the indecomposibility of $D_I(A)$ is equivalent to the fact that it is a local ring. By the Grothendieck finiteness result (cf. for instance \cite[Corollary 2.10]{pS1}) $D_I(A)$ is a finitely generated $A$-module. Because $A$ is a complete local ring and $D_I(A)$ is a finitely generated $A$-module the number of maximal ideals corresponds to the number of indecomposable components of $D_I(A)$
(cf. for instance \cite[Corollary 7.6]{dE}).
\end{proof}

\section{The case of maximal ideal}
In this section let $(A, \mathfrak m)$ denote a $d$-dimensional local ring. As
a first application of the above investigations we shall discuss whenever $H^d_{\mathfrak m}(A)$ is indecomposable. Therefore we shall recover the result of Hochster and Huneke
(cf. \cite{HH}). To this end we need the following definition of Hochster and Huneke (cf. \cite[(3.3)]{HH}) and Lyubeznik (cf. \cite{gL2}).

\begin{definition} \label{6.1} Let $A$ denote an
commutative Noetherian ring with finite dimension. We denote by
$\mathbb G_A$ the undirected graph whose vertices are primes
$\mathfrak p$ of $A$ such that $\dim A = \dim A/\mathfrak p,$ and
two distinct vertices $\mathfrak p, \mathfrak q$ are joined by
an edge if and only if $(\mathfrak p, \mathfrak q)$ is an ideal of
height one.
\end{definition}

We assume that $A$ is the factor ring of an $n$-dimensional Gorenstein ring $R.$ In the following we need a few properties about the $S_2$-fication of $A.$ For the detail we refer
to \cite{HH} and \cite{pS2}. Suppose that $A$ is equi dimensional. Then the $S_2$-fication $A \subset B$ of $A$ is given by
\[
B = \Hom_A(K(A),K(A)) \simeq \Ext_R^c(\Ext_R^c(A,R),R), \quad c = n -d,
\]
where $K(A) = \Ext_R^c(A,R)$ denotes the canonical module of $A.$

\begin{lemma} \label{6.2} Fix the previous notation. Suppose $A$ is equidimensional.
Let $C = B/A$ the cokernel of the inclusion $A \subset B.$ Define $I = \Ann_A C.$
\begin{itemize}
  \item[(a)] $V(I)$ is the non-$S_2$-locus of $A.$
  \item[(b)] $\dim A/I \leq \dim A - 2.$
  \item[(c)] $B \simeq D_I(A).$
\end{itemize}
\end{lemma}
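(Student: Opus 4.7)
The plan is to dispatch the three assertions in sequence, relying on three standard structural properties of the $S_2$-ification $A \subset B$ (see \cite{HH,pS2}): $B$ is a finitely generated $A$-module, $B$ itself satisfies Serre's condition $S_2$ over $A$, and the localized map $A_\mathfrak{p} \to B_\mathfrak{p}$ is an isomorphism exactly at those primes where $A_\mathfrak{p}$ is already $S_2$. With these in hand, part (a) is formal: $C = B/A$ is finitely generated, so $V(I) = V(\Ann_A C) = \Supp_A C$, and $\mathfrak{p} \in \Supp_A C$ is equivalent to $A_\mathfrak{p} \neq B_\mathfrak{p}$, i.e.\ to $A_\mathfrak{p}$ failing $S_2$; this identifies $V(I)$ with the non-$S_2$-locus of $A$.

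For part (b), by (a) it suffices to show that $A_\mathfrak{p}$ is $S_2$ whenever $\dim A_\mathfrak{p} \leq 1$. The case $\dim A_\mathfrak{p} = 0$ is vacuous. If $\dim A_\mathfrak{p} = 1$ and $A_\mathfrak{p}$ failed $S_2$, then $\depth A_\mathfrak{p} = 0$ and $\mathfrak{p}$ would be an associated prime of $A$ with $\dim A/\mathfrak{p} = d - 1$ (by catenarity of the Gorenstein ambient $R$); but equidimensionality of $A$, interpreted in the strength needed for $B = \Hom_A(K(A),K(A))$ to be a genuine extension of $A$, amounts to $\Ass_A A = \Assh_A A$ and hence excludes such an embedded prime. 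Combined with the catenary equality $\height \mathfrak{p} + \dim A/\mathfrak{p} = d$, this gives $\dim A/I \leq d - 2$.

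For part (c), I apply the left-exact functor $D_I$ to the short exact sequence $0 \to A \to B \to C \to 0$. Since $IC = 0$ and $C$ is finitely generated, $C$ is $I$-torsion, so $H^0_I(C) = C$ and $H^1_I(C) = 0$; the fundamental sequence $0 \to H^0_I(C) \to C \to D_I(C) \to H^1_I(C) \to 0$ recalled in Section~3 then forces $D_I(C) = 0$. Left exactness of $D_I$ therefore yields $D_I(A) \simeq D_I(B)$, and it only remains to identify $D_I(B)$ with $B$. By (b), every $\mathfrak{p} \in V(I)$ has $\dim A_\mathfrak{p} \geq 2$, hence $\dim B_\mathfrak{p} \geq 2$ since $A \subset B$ is module-finite; combined with the $S_2$ property of $B$ this gives $\grade(I,B) \geq 2$, so $H^0_I(B) = H^1_I(B) = 0$. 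The fundamental sequence applied to $B$ then produces $B \simeq D_I(B) \simeq D_I(A)$, completing the proof.

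The step that I expect to require the most care is part (b): it rests on the fact that the hypothesis ``$A$ equidimensional'' in the construction of $B$ has to be read strongly enough (effectively as unmixedness) to exclude embedded associated primes of codimension one, which is precisely the obstruction to $A_\mathfrak{p}$ being $S_2$ at a one-dimensional localization. Once this is granted, part (a) is purely formal from the defining property of the $S_2$-ification, and part (c) reduces to the clean cohomological identification above.
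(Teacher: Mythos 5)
Your proof is correct and, for part (c), follows exactly the paper's argument: apply $D_I(\cdot)$ to $0 \to A \to B \to C \to 0$ to get $D_I(A) \simeq D_I(B)$, then use the $S_2$-property of $B$ together with $\height I \geq 2$ to conclude $\grade(I,B) \geq 2$ and hence $B \simeq D_I(B)$. For (a) and (b) the paper merely cites \cite[Lemma 5.3]{pS2}; you supply the standard localization argument instead, and your observation that ``equidimensional'' must be read as unmixed ($\Ass A = \Assh A$) for $A \hookrightarrow \Hom_A(K(A),K(A))$ to be injective and for the codimension-one $S_2$-condition to hold is exactly the right point of care.
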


\begin{proof} The statements in (a) and (b) are known (cf. for instance \cite[Lemma 5.3]{pS2}). For the proof of (c) we apply the functor $D_I(\cdot)$ to the short exact sequence $0 \to A \to B \to C \to 0.$ It provides an isomorphism $D_I(A) \simeq D_I(B).$ Moreover $H^i_I(B)= 0$ for $i \leq 1$ because $B$ satisfies $S_2$ and $\height I \geq 2.$ Therefore $\grade IB \geq 2$ and thus $B \simeq D_I(B).$
\end{proof}

As a consequence of the previous investigations we are able to prove the result of Hochster and Huneke (cf. \cite[(3.6)]{HH}).

\begin{corollary} \label{6.3} Let $(A, \mathfrak m)$ denote a $d$-dimensional complete local ring. Then the following conditions are equivalent:
\begin{itemize}
  \item[(i)] The local cohomology module $H^d_{\mathfrak m}(A)$ is indecomposable.
  \item[(ii)] The endomorphism ring $\Hom_A(H^d_{\mathfrak m}(A), H^d_{\mathfrak m}(A))$ is a local Noetherian ring, finitely generated as $A$-module.
  \item[(iii)] The graph $\mathbb G_A$ is connected.
  \item[(iv)] $V(0_d)$ is connected in codimension one.
\end{itemize}
\end{corollary}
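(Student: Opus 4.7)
The plan is to reduce the four-way equivalence to the equidimensional case and then combine Matlis duality, the $S_2$-fication interpretation of Lemma \ref{6.2}, and the indecomposability--connectedness dictionary of Theorem \ref{3.2} and Corollary \ref{5.3}. Since $\dim_A 0_d < d$, the module $0_d$ satisfies $H^d_{\mathfrak m}(0_d) = 0$, so the exact sequence $0 \to 0_d \to A \to A/0_d \to 0$ yields $H^d_{\mathfrak m}(A) \simeq H^d_{\mathfrak m}(A/0_d)$; moreover $\mathbb G_A = \mathbb G_{A/0_d}$ and $V(0_d) = \Spec A/0_d$, so I may replace $A$ by $A/0_d$ and assume $A$ is equidimensional of dimension $d$ with $0_d = 0$. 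By Cohen's structure theorem $A = R/J$ for a complete regular (hence Gorenstein) local ring $R$ of dimension $n$, and I set $c = n - d = \height J$. Local duality together with Matlis duality identifies $K(A) = \Ext^c_R(A,R)$ as the Matlis dual of $H^d_{\mathfrak m}(A)$ and yields
\[
\Hom_A(H^d_{\mathfrak m}(A), H^d_{\mathfrak m}(A)) \simeq \Hom_A(K(A), K(A)) = B,
\]
the $S_2$-fication studied in Lemma \ref{6.2}. In particular this endomorphism ring is a finitely generated commutative $A$-algebra.

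For (i) $\Leftrightarrow$ (ii) I would invoke the standard fact that a module $M$ is indecomposable iff $\Hom_A(M,M)$ contains no nontrivial idempotents, together with the observation that the finitely generated commutative algebra $B$ over the complete local ring $A$ is semilocal and decomposes as $B \simeq \prod_{\mathfrak n} B_{\mathfrak n}$ over its maximal ideals; hence $B$ has no nontrivial idempotents iff $B$ is local. For (ii) $\Leftrightarrow$ (iv), Lemma \ref{6.2} provides $B \simeq D_I(A)$ with $I = \Ann_A(B/A)$ and $\dim A/I \le d-2$. Since $A$ is equidimensional, the hypothesis $\height(I + \mathfrak p)/\mathfrak p \ge 2$ for all $\mathfrak p \in \Ass A$ required by Corollary \ref{5.3} holds, and so $D_I(A)$ is local iff $\Spec A \setminus V(I)$ is connected; because $V(I)$ has codimension at least two, a short argument then identifies this condition with $V(0_d) = \Spec A$ being connected in codimension one. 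Finally (iii) $\Leftrightarrow$ (iv) is tautological from the definition of the incidence graph $\mathbb G_A$.

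The main obstacle I anticipate is the last identification: verifying that shrinking $\Spec A$ by the codimension-$\geq 2$ set $V(I)$ neither destroys nor creates connectedness among top-dimensional components, i.e.\ that any height-one prime $\mathfrak P$ connecting two top-dimensional primes $\mathfrak p, \mathfrak q$ automatically lies outside $V(I)$. Once this is checked, the chain (i) $\Leftrightarrow$ (ii) $\Leftrightarrow$ (iv) $\Leftrightarrow$ (iii) closes.
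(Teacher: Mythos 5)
Your overall route is the same as the paper's: reduce to $0_d=0$, present $A$ as a quotient of a Gorenstein ring, use Matlis/local duality to identify $\Hom_A(H^d_{\mathfrak m}(A),H^d_{\mathfrak m}(A))$ with $B=\Hom_A(K(A),K(A))$, identify $B$ with $D_I(A)$ for $I$ the defining ideal of the non-$S_2$-locus (Lemma \ref{6.2}), and then invoke Theorem \ref{3.2} and Corollary \ref{5.3}. Your treatment of (i) $\Leftrightarrow$ (ii) via idempotents in the commutative module-finite algebra $B$ is a harmless variant of the paper's double-dual argument $K(A)\simeq K(B)\simeq K(U)\oplus K(V)$, and your verification of the hypothesis of Corollary \ref{5.3} (using that $0_d=0$ forces $A$ unmixed and that $\dim A/I\le d-2$ in the catenary complete ring $A$) is correct.

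The step you defer to the end, however, is a genuine gap, and you have misidentified which half of it carries the content. The easy half is the one you describe: a height-one prime linking two minimal primes automatically avoids $V(I)$, since every prime in $V(I)$ has height at least two; this gives ``$\mathbb G_A$ connected $\Rightarrow$ $\Spec A\setminus V(I)$ connected.'' The hard half is the converse. Connectedness of $\Spec A\setminus V(I)$ (equivalently, indecomposability of $D_I(A)$ via Theorem \ref{3.2}) only provides chains in which consecutive minimal primes are both contained in some prime $\mathfrak P\notin V(I)$ of possibly large height; to conclude that $\mathbb G_A$ is connected you must replace each such link by a chain of \emph{height-one} links. This is exactly where the $S_2$ condition is used: for $\mathfrak P\notin V(I)$ the localization $A_{\mathfrak P}$ is $S_2$, equidimensional and catenary, and one needs the Hartshorne--Hochster--Huneke fact that such a ring is connected in codimension one (provable by induction from Corollary \ref{3.3}), so that all minimal primes of $A$ contained in $\mathfrak P$ already lie in one component of $\mathbb G_A$. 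Without this input the equivalence of (iii)/(iv) with the indecomposability of $D_I(A)$ does not follow from ``the definitions,'' since the graph $\mathbb G(I)$ of Definition \ref{5.1} is a priori coarser than $\mathbb G_A$. (The paper's own wording is admittedly terse on the same point, but this is the lemma your argument is missing.)
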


\begin{proof} First of all note that we may reduce the statatements to the case of $0_d = 0.$ By the Cohen Structure Theorem we may assume that $A$ is the factor ring of an $n$-dimensional Gorenstein ring $R.$ 

By Matlis Duality it follows that $\Hom_A(H^d_{\mathfrak m}(A), H^d_{\mathfrak m}(A)) \simeq \Hom_A(K(A),K(A)).$ Moreover Matlis Duality provides also that $H^d_{\mathfrak m}(A)$ is indecomposable if and only if $K(A)$ is indecomposable. So (ii) implies (i) as easily seen. For the converse let $K(A)$ be an indecomposable $A$-module and assume that $B \simeq K(K(A)) = U\oplus V$ decomposes into two non-zero direct summands. Then $K(A) \simeq K(B) \simeq K(U) \oplus K(V)$ with two non-zero direct summands,
a contradiction.

By virtue of the definitions and Theorem \ref{3.2} as well as Corollary \ref{5.3} the last three conditions are equivalent to the fact that $D_I(A) \simeq \Hom_A(K(A),K(A))$ is indecomposable as $A$-module. Notice also that it is a finitely generated module over the complete local ring $A.$
\end{proof}

\section{The Gorenstein situation}
In this section we assume that $(R, \mathfrak m)$ is an $n$-dimensional Gorenstein ring. Let $I$ denote an ideal such that $c = \height I$ and $d = \dim R/I.$ We study the problem when $H^c_I(R)$ is indecomposable as an $R$-module.

\begin{theorem} \label{7.1} With the previous notation the following conditions are equivalent:
\begin{itemize}
  \item[(i)] The local cohomology module $H^c_I(R)$ is indecomposable.
  \item[(ii)] The graph $\mathbb G_{R/I}$ is connected.
  \item[(iii)] The variety $V(I_d)$ is connected in codimension one.
\end{itemize}
\end{theorem}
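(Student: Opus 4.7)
My plan has three parts. First, (ii)$\Leftrightarrow$(iii) is a direct translation of definitions, since the vertices of $\mathbb G_{R/I}$ are the minimal primes of $I_d$ and the edge condition ``$\mathfrak p + \mathfrak q$ has height one in $R/I$'' is the combinatorial encoding of codimension-one connectedness of $V(I_d)$. Before attacking (i)$\Leftrightarrow$(iii), I reduce to the case $I = I_d$: writing $I = I_d \cap J$ with $J$ the intersection of the lower-dimensional primary components gives $\height J > c$ and, since any prime over $I_d + J$ strictly contains some minimal prime of $I_d$, also $\height(I_d + J) \geq c + 1$. Iterating the Mayer--Vietoris sequence for local cohomology together with Cohen--Macaulay vanishing at the higher-height terms yields $H^c_I(R) \simeq H^c_{I_d}(R)$, and $\mathbb G_{R/I}$ and $V(I_d)$ plainly depend only on $I_d$. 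From here on I assume $I = I_d$ is equidimensional of height $c$.

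For the implication (i)$\Rightarrow$(iii), I argue by contrapositive. A failure of codimension-one connectedness of $V(I)$ yields ideals $J_1, J_2$ of pure height $c$ with $I = J_1 \cap J_2$ up to radical, both $V(J_i)$ nonempty, and $\height(J_1 + J_2) \geq c + 2$. Cohen--Macaulay vanishing gives $H^{c-1}_{J_1 + J_2}(R) = H^c_{J_1 + J_2}(R) = 0$, and the Mayer--Vietoris sequence collapses to the nontrivial direct sum $H^c_I(R) \simeq H^c_{J_1}(R) \oplus H^c_{J_2}(R)$, both summands being nonzero.

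The main work lies in the converse (iii)$\Rightarrow$(i), which I plan to establish by invoking Proposition \ref{2.1} with $M = H^c_I(R)$ and the specialization-stable subset
\[
T = \{\mathfrak p \in \Spec R : \mathfrak p \supseteq I,\ \dim(R/I)_{\mathfrak p} \geq 2\}.
\]
Condition (b) of Proposition \ref{2.1} is exactly the hypothesis (iii). For condition (c), at a minimal prime of $I$ the localization $H^c_I(R)_{\mathfrak p}$ is the injective hull $E_{R_{\mathfrak p}}(k(\mathfrak p))$ and hence indecomposable, while at a prime $\mathfrak p$ with $\dim(R/I)_{\mathfrak p} = 1$ Lemma \ref{4.3} applied inside $R_{\mathfrak p}$ identifies the endomorphism ring of $H^c_{IR_{\mathfrak p}}(R_{\mathfrak p})$ with a local Noetherian ring, which rules out nontrivial idempotents. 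I expect the main obstacle to be condition (a), namely $\Ass H^c_I(R) \cap T = \emptyset$: a priori $H^c_I(R)$ can carry embedded associated primes coming from the non-$S_2$-locus of $R/I$, and these sit inside $T$. My strategy for overcoming this is to combine the commutativity of $\Hom_R(H^c_I(R), H^c_I(R))$ established in Lemma \ref{4.1} with the canonical-module embedding $K(R/I) \hookrightarrow H^c_I(R)$, whose associated primes are exactly $\Min(I_d) = \Min(I)$: this should force any nontrivial direct summand of $H^c_I(R)$ to carry a minimal prime of $I$ among its associated primes, so that the Ass-chasing step in the proof of Proposition \ref{2.1} need only be run over the vertices of $\mathbb G_{R/I}$, where the connectedness assumption furnishes the required contradiction.
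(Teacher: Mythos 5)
Your overall route coincides with the paper's: reduce to $I=I_d$ via Mayer--Vietoris, get (i)$\Rightarrow$(iii) by splitting $H^c_I(R)$ along the components of the graph, and get (iii)$\Rightarrow$(i) by the chain argument of Proposition \ref{2.1}, with indecomposability at the linking primes supplied by Lemma \ref{4.3} (dimension one) and by $H^c_{IR_{\mathfrak p}}(R_{\mathfrak p})\simeq E_{R_{\mathfrak p}}(k(\mathfrak p))$ (dimension zero). The one place where you do not actually have a proof is exactly the place you flag: condition (a), i.e.\ that $\Ass_R H^c_I(R)$ meets no prime $\mathfrak p$ with $\height \mathfrak p/I\geq 2$ (equivalently, that each of $\Ass U$ and $\Ass V$ contains a vertex of $\mathbb G_{R/I}$). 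Your proposed patch --- commutativity of $\Hom_R(H^c_I(R),H^c_I(R))$ combined with the embedding $K(R/I)\hookrightarrow H^c_I(R)$ --- does not close this. Indecomposability at the minimal primes only tells you that $\Min(I)$ is partitioned between $\Supp U$ and $\Supp V$; it does not exclude the scenario $\Min(I)\subseteq\Supp U$ and $\dim\Supp V<d$ with $\Ass V$ consisting entirely of embedded primes, and neither commutativity of the endomorphism ring nor the fact that $\Ass K(R/I)=\Min(I_d)$ rules this out (a projection of $K(R/I)$ onto $V$ is a quotient of $K(R/I)$, which may well be nonzero and lower-dimensional). What actually closes the gap, and what the paper uses, is the equality $\Ass_R H^c_I(R)=\Ass_R\Ext^c_R(R/I,R)=\Assh(R/I)$: since $H^c_I(R)=\varinjlim\Ext^c_R(R/I^{\alpha},R)$ and the associated primes of a filtered direct limit lie in the union of those of the terms, while $\Ass\Ext^c_R(R/I^{\alpha},R)=\Assh(R/I)$ by the standard description of $\Ass$ of a canonical module (the fact already invoked in Proposition \ref{2.3}), every associated prime of $H^c_I(R)$ is minimal over $I_d$. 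With that in hand your worry evaporates and the chain argument goes through verbatim.

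A second, smaller point: in the reduction to $I=I_d$ you only claim $\height(I_d+J)\geq c+1$. That kills $H^c_{I_d+J}(R)$ and gives injectivity of $H^c_{I_d}(R)\oplus H^c_J(R)\to H^c_I(R)$, but surjectivity requires the connecting map into $H^{c+1}_{I_d+J}(R)$ to vanish, for which you need $\height(I_d+J)\geq c+2$. This does hold --- after passing to radicals one may take $J$ to be the intersection of the minimal primes of $I$ of height $>c$, and then a height-$(c+1)$ prime containing both $I_d$ and $J$ would be a minimal prime of $I$ containing another minimal prime of $I$, which is impossible --- but as written your estimate is one short of what the Mayer--Vietoris argument needs. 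The rest (the equivalence (ii)$\Leftrightarrow$(iii) and the contrapositive (i)$\Rightarrow$(iii)) is correct and agrees with the paper.
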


\begin{proof} First of all we shall prove that one might replace $I$ by $I_d.$ To this end we have to show that $H^c_I(R) \simeq H^c_{I_d}(R).$ So let $J$ denote the intersection of all the primary components of $I$ with dimension less than $d.$ Moreover, let $I_d = J_1\cap \ldots \cap J_s$ be a reduced primary decomposition and therefore $I = I_d \cap J.$ 

We follow an argument given by Lyubeznik (cf. \cite[Lemma 2.1]{gL2}). The Mayer-Vietoris sequence yields an exact sequence
\[
H^c_{I_d + J}(R) \to H^c_{I_d}(R) \oplus H^c_J(R) \to H^c_I(R) \to H^{c+1}_{I_d+J}(R).
\]
We have $\height J > c$ and thus $H^c_J(R) = 0.$ Moreover $\Rad (I_d + J) = \cap_{i=1}^s \Rad (J_i + J)$
and therefore $\height (I_d + J)  \geq c+ 2$ because of $\height J \geq c+1$ and $J_i \not\subseteq J.$ This
proves that $H^c_{I_d}(R) \simeq H^c_I(R).$

Now, the equivalence of (ii) and (iii) follows just by the definitions. Let us continue with the proof of the implication (i) $\Longrightarrow$ (ii).  Suppose that $\mathbb G_{R/I}$ is not connected
in codimension one. Let $\mathbb G_1, \ldots, \mathbb G_t, t > 1,$ denote the connected
components of $\mathbb G_{R/I}.$ Moreover, let $I_i, i = 1,\ldots,t,$ denote the intersection of all
minimal primes of $V(I)$ that are vertices of $\mathbb G_i.$ Then
\[
H^c_I(R) \simeq \oplus_{i = 1}^t H^c_{I_i}(R)
\]
as it is a consequence of the Mayer-Vietoris
sequence for local cohomology (cf.
\cite[Proposition 2.1]{gL2} for the details). Because of $H^c_{I_i}(R) \not= 0, i = 1,\ldots, t,$ this is a contradiction.

For the proof of (iii) $\Longrightarrow$ (i) we first prove that $H^c_I(R)$ is indecomposable
in codimension one. First we show $\Supp_R H^c_I(R) = V(I).$ Clearly $\Supp_R H^c_I(R) \subset V(I),$ so let $\mathfrak p \in V(I)$ be a minimal prime ideal. Then $H^c_{IR_{\mathfrak p}}(R_{\mathfrak p}) \simeq H^c_I(R) \otimes R_{\mathfrak p} \not= 0$ because $R_{\mathfrak p}$ is a Gorenstein ring. 

Now let $\mathfrak p \in \Supp_R H^c_I(R)$ with $\height \mathfrak p/I \leq 1.$ We have to show that $H^c_{I R_{\mathfrak p}}(R_{\mathfrak p})$ is indecomposable. For $\dim R_{\mathfrak p}/IR_{\mathfrak p} = 1$ this follows by Lemma \ref{4.3} while it is obviously true for $\dim R_{\mathfrak p}/IR_{\mathfrak p} = 0.$

Now suppose that $H^c_I(R) \simeq U \oplus V$ with $U, V \not= 0.$ Then
\[
\Ass_R R/I = \Ass_R H^c_I(R) = \Ass_R U \cup \Ass_R V.
\]
Choose prime ideals $\mathfrak p \in \Ass U$ and $\mathfrak q \in \Ass V.$ By the assumption there is a chain of prime ideals
\[
\mathfrak p = \mathfrak p_0, \mathfrak p_1, \ldots, \mathfrak p_s
= \mathfrak q
\]
of dimension $d$ in $\Supp_R H^c_I(R)$ and a family
$\mathfrak P_1, \ldots, \mathfrak P_s$
of prime ideals in $\Supp_R H^c_I(R)$ with the following properties
\[
\mathfrak p_{i-1} + \mathfrak p_i \subset \mathfrak P_i, i =
1,\ldots,s,
\]
and $\height \mathfrak P_i/I = 1, i = 1,\ldots, s.$ Now by the same screw driver argument as at the end of the proof of Theorem \ref{2.1} it follows that $\mathfrak q \not\in \Ass V,$ a contradiction.
\end{proof}

\begin{example} \label{7.2}
Let $k$ be a field and let
$A = k[|u,v,x,y|]$ be the formal power series ring in four
variables. Put $R = A/fA,$ where $f = xv-yu.$ Let $I = (x,y)R.$ It is
shown that
\[
B = \Hom_R(H^1_I(R), H^1_I(R))
\]
is not a finitely generated $R$-module, while it is a Noetherian ring (cf. \cite[Example 3.5]{pS3}).

To this end put $B_{\alpha} = k[|u,v|][a]/(a^{\alpha}),$ where $a$ is an indeterminate over $k[|u,v|].$Then there is a natural homomorphism $R \to B_{\alpha}$ induced by $x \mapsto ua, v \mapsto va.$ It turns out (cf. \cite[Example 3.5]{pS3}) that
\[
B \simeq \varprojlim B_{\alpha} \simeq k[|u,v,a|].
\]
Therefore $B$ is a Noetherian ring. Moreover $B$ is not a finitely generated $R$-module.
\end{example}

It is an open problem to us whether the conditions of Theorem \ref{7.1} are equivalent to the fact that $\Hom_R(H^c_I(R), H^c_I(R))$ is a local Noetherian ring. Moreover, we do not know how to generalize Theorem \ref{7.1} to the case of an arbitrary local ring. One of the critical points
is Lemma \ref{4.1}. We do not know a substitute for an arbitrary local ring.

\section{Formal cohomology}
Let $I$ denote an ideal of a local ring $(A, \mathfrak m).$ For an $A$-module $M$ the notion of the formal cohomology $\varprojlim H^i_{\mathfrak m}(M/I^{\alpha}M)$ was introduced in \cite{pS4}. There is an exact sequence
\[
0 \to \varprojlim H^0_{\mathfrak m}(M/I^{\alpha}M) \to {\hat M}^I \to D^I(M) \to \varprojlim
H^1_{\mathfrak m}(M/I^{\alpha}M) \to 0,
\]
where $D^I(M) = \varprojlim D_{\mathfrak m}(M/I^{\alpha}M).$ We refer to \cite{pS4} for basic definitions on formal cohomology modules. Clearly $D^I(A)$ admits the structure of a commutative ring. Here we start with a variation on the Mayer-Vietoris sequence on formal cohomology.

\begin{lemma} \label{8.1} Let $(A, \mathfrak m)$ denote a commutative ring. Let $M$ be a finitely generated $A$-module. There is an exact sequence
\[
0 \to D^{I\cap J}(M) \to D^I(M) \oplus D^J(M) \to D^{I + J}(M)
\]
for two ideals $I,J$ of $A.$
\end{lemma}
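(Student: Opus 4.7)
The plan is to imitate the proof of Lemma \ref{3.1}(a) at each finite level $\alpha$, then to pass to the inverse limit and to reconcile the resulting filtrations with the $(I\cap J)$- and $(I+J)$-adic filtrations.

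First, for each $\alpha \geq 1$ I would produce the canonical short exact sequence
\[
0 \to M/(I^\alpha M \cap J^\alpha M) \to M/I^\alpha M \oplus M/J^\alpha M \to M/(I^\alpha M + J^\alpha M) \to 0,
\]
whose first map is the diagonal $m \mapsto (m,m)$ and whose second map is the difference $(a,b) \mapsto a-b$. Injectivity on the left is immediate; exactness in the middle is the standard observation that if $a-b = u+v$ with $u \in I^\alpha M$ and $v \in J^\alpha M$, then $a-u = b+v$ defines a well-defined preimage modulo $I^\alpha M \cap J^\alpha M$; surjectivity on the right is clear.

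Next I would apply the left-exact functor $D_{\mathfrak m}(-) = \varinjlim \Hom_A(\mathfrak m^\beta,-)$, which preserves injections and kernels. The resulting left-exact sequences are functorial in $\alpha$ and so assemble into a compatible inverse system. Applying the left-exact functor $\varprojlim_\alpha$ yields
\[
0 \to \varprojlim_\alpha D_{\mathfrak m}(M/(I^\alpha M \cap J^\alpha M)) \to D^I(M) \oplus D^J(M) \to \varprojlim_\alpha D_{\mathfrak m}(M/(I^\alpha M + J^\alpha M)).
\]

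The final step is to identify the outer terms with $D^{I \cap J}(M)$ and $D^{I+J}(M)$. For the sum side the filtrations $\{(I+J)^\alpha M\}$ and $\{(I^\alpha + J^\alpha)M\}$ are cofinal, since $I^\alpha + J^\alpha \subseteq (I+J)^\alpha$ is trivial and a multinomial expansion shows $(I+J)^{2\alpha-1} \subseteq I^\alpha + J^\alpha$ (each monomial contains either the $\alpha$-th power of $I$ or of $J$). For the intersection side one has $(I \cap J)^\alpha M \subseteq I^\alpha M \cap J^\alpha M$ directly, while the reverse cofinality is obtained from the Artin-Rees lemma applied to the submodule $J^\alpha M \subseteq M$ and the ideal $I$: this yields an integer $k = k(\alpha)$ with $I^\beta M \cap J^\alpha M \subseteq I^{\beta-k} J^\alpha M$ for $\beta \geq k$, so choosing $\beta \geq \alpha + k$ gives $I^\beta M \cap J^\beta M \subseteq I^\alpha J^\alpha M \subseteq (I \cap J)^\alpha M$.

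The main obstacle is precisely this last cofinality: in general $\{I^\alpha M \cap J^\alpha M\}$ and $\{(I \cap J)^\alpha M\}$ are comparable in only one direction, and it is essential here that $A$ is Noetherian and $M$ finitely generated so that Artin-Rees can be invoked. Once that identification is made, everything else is formal from left-exactness of $D_{\mathfrak m}$ and $\varprojlim$.
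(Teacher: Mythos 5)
Your proof is correct and follows essentially the same route as the paper: the level-$\alpha$ Mayer--Vietoris short exact sequence, application of the left-exact functors $D_{\mathfrak m}$ and $\varprojlim$, and identification of the outer limits via equivalence of the relevant topologies. The only difference is that you spell out the Artin--Rees cofinality argument for the intersection filtration, where the paper simply cites \cite[Lemma 3.4, Lemma 3.8]{pS4}.
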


\begin{proof} For an arbitrary $\alpha \in \mathbb N$ there is a short exact sequence
\[
0 \to M/(I^{\alpha} \cap J^{\alpha})M \to M/I^{\alpha}M \oplus M/J^{\alpha}M \to M/(I^{\alpha} + J^{\alpha})M \to 0.
\]
Next we apply the ideal transform $D_{\mathfrak m}(\cdot)$ to this sequence. It induces an exact sequence
\[
0 \to D_{\mathfrak m}(M/(I^{\alpha} \cap J^{\alpha})M) \to D_{\mathfrak m}(M/I^{\alpha}M) \oplus D_{\mathfrak m}(M/J^{\alpha}M) \to D_{\mathfrak m}(M/(I^{\alpha} + J^{\alpha})M).
\]
The modules in the sequences form an inverse system. So we may pass to the inverse limit. Because the inverse limit is left exact, it provides the exact sequence of the claim. To do so we have to note the $I\cap J$-adic resp. the $(I + J)$-adic topology is equivalent to the topology defined by $(I^{\alpha} \cap J^{\alpha})M$ resp. $(I^{\alpha} + J^{\alpha})M$ (cf. \cite[Lemma 3.4, Lemma 3.8]{pS4}) for the details.
\end{proof}

As in the case of ideal transform we might use the exact sequence in \ref{8.1} in order to derive a connectedness results.

\begin{theorem} \label{8.2} Fix the previous notation and assume in addition that $A$ is a complete local ring. Then the following conditions are equivalent:
\begin{itemize}
  \item[(i)]  $D^I(A)$ is indecomposable as a ring.
  \item[(ii)] $V(I) \setminus \{\mathfrak m\}$ is connected.
\end{itemize}
\end{theorem}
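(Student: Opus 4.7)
The plan is to mirror the argument of Theorem \ref{3.2}, replacing the Mayer--Vietoris sequence for ideal transforms by its formal-cohomology analogue from Lemma \ref{8.1}, and replacing the module-level indecomposability argument by a direct manipulation of idempotents.

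For the implication (i)~$\Rightarrow$~(ii) I would argue contrapositively. Assuming $V(I) \setminus \{\mathfrak m\}$ is disconnected, I would choose radical ideals $I_1, I_2 \supset I$ such that $V(I_i) \setminus \{\mathfrak m\}$ is non-empty, $V(I_1) \cap V(I_2) \subseteq \{\mathfrak m\}$, and $V(I_1 \cap I_2) = V(I)$. Thus $\Rad(I_1 \cap I_2) = \Rad I$, while $I_1 + I_2$ is $\mathfrak m$-primary. Feeding this into Lemma \ref{8.1} gives
\[
0 \to D^{I_1 \cap I_2}(A) \to D^{I_1}(A) \oplus D^{I_2}(A) \to D^{I_1 + I_2}(A).
\]
Cofinality of the $I$-adic and $(I_1 \cap I_2)$-adic filtrations identifies $D^{I_1 \cap I_2}(A) \simeq D^I(A)$; $\mathfrak m$-primariness of $I_1 + I_2$ forces $D^{I_1+I_2}(A) = 0$, because every $A/(I_1+I_2)^\alpha$ has finite length and so $D_{\mathfrak m}(A/(I_1+I_2)^\alpha) = 0$. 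Hence $D^I(A) \simeq D^{I_1}(A) \oplus D^{I_2}(A)$ as $A$-modules, and in fact as rings because each component map $D^I(A) \to D^{I_i}(A)$ is induced by ring quotient morphisms. Both factors are non-zero rings: for $I_i$ not $\mathfrak m$-primary each $D_{\mathfrak m}(A/I_i^\alpha)$ is a non-zero unital ring, and the inverse limit retains the unit. This contradicts the ring-indecomposability of $D^I(A)$.

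For (ii)~$\Rightarrow$~(i) I would argue directly at the level of idempotents. Let $e = (e_\alpha) \in D^I(A) = \varprojlim_{\alpha} D_{\mathfrak m}(A/I^\alpha)$ be an idempotent, so each $e_\alpha$ is an idempotent in the ring of sections of the structure sheaf of the open subscheme $U_\alpha := \Spec(A/I^\alpha) \setminus \{\mathfrak m\}$. The underlying topological space of $U_\alpha$ is $V(I) \setminus \{\mathfrak m\}$, independently of $\alpha$, and is connected by hypothesis. Since idempotents in $\Gamma(U_\alpha, \mathcal{O})$ correspond to clopen subsets of $U_\alpha$, each $e_\alpha$ equals $0$ or $1$. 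Compatibility of the inverse system together with the fact that the transition maps are unital ring homomorphisms forces the tuple to be constant, so $e \in \{0,1\}$; hence $D^I(A)$ has no non-trivial idempotents.

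The main obstacle I anticipate is the verification in (i)~$\Rightarrow$~(ii) that the Mayer--Vietoris isomorphism $D^I(A) \simeq D^{I_1}(A) \oplus D^{I_2}(A)$ is an isomorphism of \emph{rings}, i.e.\ that Lemma \ref{8.1} can be upgraded from an exact sequence of $A$-modules to one compatible with the canonical ring structures. Once the functoriality of $D^{(-)}(A)$ with respect to inclusions of ideals (up to radical) is set up using the cofinality trick from the proof of Lemma \ref{8.1}, this compatibility follows because every map in sight comes from quotient morphisms of rings.
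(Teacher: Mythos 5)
Your proof is correct and follows essentially the same route as the paper: (i) $\Rightarrow$ (ii) via the Mayer--Vietoris sequence of Lemma \ref{8.1} applied to ideals with $\Rad(I_1\cap I_2)=\Rad I$ and $I_1+I_2$ $\mathfrak m$-primary, and (ii) $\Rightarrow$ (i) by projecting an idempotent of $D^I(A)$ to each $D_{\mathfrak m}(A/I^{\alpha})$, where connectedness forces it to be $0$ or $1$, and then using compatibility of the transition maps in the inverse system. The only (harmless) deviations are that you certify $D^{I_i}(A)\neq 0$ by tracking the unit element through the inverse limit rather than via the four-term exact sequence relating ${\hat A}^{I_i}$ to formal cohomology, and you derive triviality of the idempotents $e_{\alpha}$ from the clopen-set description of $\Spec(A/I^{\alpha})\setminus\{\mathfrak m\}$ rather than from Theorem \ref{3.2}.
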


\begin{proof} (i) $\Longrightarrow$ (ii): Suppose the contrary. That is $V(I) \setminus \{\mathfrak m\}$ is disconnected. Then there are two ideal $I_i, i = 1,2,$ such that the following conditions are satisfied:
\begin{itemize}
\item $V(I_i) \setminus \{\mathfrak m\} \not= \emptyset$ for $i = 1,2.$
\item $(V(I_1)\setminus \{\mathfrak m\}) \cup (V(I_2) \setminus \{\mathfrak m\}) = V(I) \setminus \{\mathfrak m\}.$
\item $(V(I_1)\setminus \{\mathfrak m\}) \cap (V(I_2) \setminus \{\mathfrak m\}) = \emptyset.$
\end{itemize}
This implies that $\Rad I_1 \cap I_2 = \Rad I$ and $I_1+I_2$ is an $\mathfrak m$-primary ideal. That is
\[
D^{I_1+I_2}(A) = \varprojlim D_{\mathfrak m}(A/(I_1+I_2)^{\alpha}) = 0
\]
as easily seen. Therefore $D^I(A) \simeq D^{I_1}(A) \oplus D^{I_2}(A)$ as it is a consequence  of the exact sequence shown in Lemma \ref{8.1}. In order to arrive at a contradiction we have to show that $D^{I_i}(A) \not= 0$ for $i = 1,2.$ Assume that $D^{I_i}(A) = 0.$ Then the exact sequence at the beginning of this section provides $\varprojlim H^0_{\mathfrak m}(A) = A$ because $A$ is a complete local ring. Therefore $\Supp A/I_i \subset \{\mathfrak m\}$ in contradiction to the first property above. Whence $D^{I_i}(A) \not= 0, i= 1,2,$ as required.

(ii) $\Longrightarrow$ (ii): As an inverse limit of commutative rings $D^I(A)$ admits the structure
of a commutative ring. Suppose that $D^I(A) = U \times V$ with two non-zero rings $U,V.$ So, there is a decomposition $1 = e_1 + e_2$ of the unit element into two non-trivial orthogonal idempotents $e_i, i = 1,2.$ Because of $D^I(A) \simeq \varprojlim D_{\mathfrak m}(A/I^{\alpha})$ the ring $D^I(A)$ is the inverse limit of the commutative rings $D_{\mathfrak m}(A/I^{\alpha}).$ Moreover, there are natural
homomorphisms of commutative rings $\phi_{\alpha} : D^I(A) \to D_{\mathfrak m}(A/I^{\alpha}), \alpha \in \mathbb N.$ Let $e_{i,\alpha} = \phi_{\alpha}(e_i)$ for $i = 1,2,$ and all  $\alpha \in \mathbb N.$

Because $V(I)\setminus \{\mathfrak m\}$ is connected it follows (cf. Theorem \ref{3.2}) that $D_{\mathfrak m}(A/I^{\alpha}), \alpha \in \mathbb N,$ is indecomposable. Therefore, there is an $i \in \{1,2\},$ say $i = 2,$ such that $e_{2,\alpha} = 0$ for infinitely many $\alpha \in \mathbb N.$  By the homomorphisms in the inverse system this implies that $e_{2,\alpha} = 0$ for all $\alpha \in \mathbb N.$
But this means that $1 = e_{1,\alpha}$ for all $\alpha \in \mathbb N$ and $e_2 = 0.$ But $V \simeq e_2 A = 0,$ in contradiction to $V \not= 0.$
\end{proof}

We do not know whether the statements of Theorem \ref{8.2} are equivalent to the fact that $D^I(A)$ is indecomposable as $A$-module. In the following we first derive a corollary of Theorem \ref{8.2} shown with different methods in
\cite[Lemma 5.4]{pS4}. It motivated the present considerations.

\begin{corollary} \label{8.3} Let $(A,\mathfrak m)$ denote a local ring. Suppose that $I \subset A$ is an ideal such that $\varprojlim H^i_{\mathfrak m}(A/I^{\alpha}) = 0$ for $i = 0, 1.$ Then $V(I \hat{A})
\setminus \{\mathfrak m\}$ is connected.
\end{corollary}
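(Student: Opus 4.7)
The plan is to apply Theorem \ref{8.2} to the completion $\hat{A}$, after first verifying that $D^I(A)$ is indecomposable as a ring. I would begin by reducing to the complete case. Since $A/I^{\alpha}$ is canonically isomorphic to $\hat{A}/I^{\alpha}\hat{A}$, neither the hypothesis $\varprojlim H^i_{\mathfrak{m}}(A/I^{\alpha}) = 0$ (for $i = 0,1$) nor the formal transform $D^I(A)$ is altered by replacing $A$ with $\hat{A}$; also $V(I\hat{A}) \setminus \{\mathfrak{m}\hat{A}\}$ coincides with $V(I) \setminus \{\mathfrak{m}\}$ computed in $\Spec \hat{A}$. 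So one may assume $A$ is complete local and aim to show that $V(I) \setminus \{\mathfrak{m}\}$ is connected.

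Next I would apply the four-term exact sequence at the opening of Section 8 to $M = A$. With the two outer $\varprojlim H^i_{\mathfrak{m}}$ terms vanishing by hypothesis, the sequence collapses to a natural isomorphism $\hat{A}^I \qism D^I(A)$. Both sides are commutative rings and the map is a ring homomorphism, so this is an isomorphism of rings. Thus, to invoke Theorem \ref{8.2}, it suffices to show that the $I$-adic completion $\hat{A}^I$ is ring-theoretically indecomposable.

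For the indecomposability of $\hat{A}^I$, I would argue via idempotents. Since $I \subseteq \mathfrak{m}$, each quotient $A/I^{\alpha}$ is a local ring with unique maximal ideal $\mathfrak{m}/I^{\alpha}$, and so contains only the trivial idempotents $0$ and $1$. Any idempotent $e \in \hat{A}^I = \varprojlim A/I^{\alpha}$ is a compatible family $(e_{\alpha})$ of such; the transition maps $A/I^{\alpha+1} \to A/I^{\alpha}$ send $0 \mapsto 0$ and $1 \mapsto 1$, so the family is forced to be constantly $0$ or constantly $1$. Hence $\hat{A}^I$ has only trivial idempotents, and Theorem \ref{8.2}, applied to $\hat{A}$ and $I\hat{A}$, yields the desired connectedness.

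The main obstacle, such as it is, lies in the verification that the isomorphism $\hat{A}^I \qism D^I(A)$ produced by the four-term sequence respects the ring structures, not merely the $A$-module structures. This should follow by tracing the definitions -- both sides are inverse limits of commutative rings, and the sequence is induced by natural ring-theoretic constructions -- but it is the one step where some care with functoriality is warranted.
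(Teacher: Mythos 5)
Your proposal is correct and follows essentially the same route as the paper: reduce to the complete case, collapse the four-term sequence to an isomorphism $\hat A^I \simeq D^I(A)$, and invoke Theorem \ref{8.2}. The only (harmless) difference is that the paper identifies $\hat A^I$ with the local ring $A$ itself (a complete local Noetherian ring is $I$-adically complete), while you argue directly that $\varprojlim A/I^{\alpha}$ has only trivial idempotents.
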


\begin{proof} It is easily seen that we may assume $(A, \mathfrak m)$ a complete local ring. Then by
view of the above exact sequence the assumption provides that $A \simeq D^I(A).$ That is, $D^I(A)$ is
indecomposable. By Theorem \ref{8.2} it follows that $V(I) \setminus \{\mathfrak m\}$ is connected.
\end{proof}

In the case of $(R, \mathfrak m)$ a Gorenstein ring it is known (see \cite{pS4}) by Matlis Duality that
\[
\varprojlim H^i_{\mathfrak m}(R/I^{\alpha}) \simeq \Hom_R(H^{n-i}_I(R), E).
\]
So the vanishing of the
formal cohomology $\varprojlim H^i_{\mathfrak m}(R/I^{\alpha}), i = 0, 1,$ is equivalent to the vanishing of $H^i_I(R), i = n, n - 1,$ where $n = \dim R.$ So we have that $V(I \hat R) \setminus \{\mathfrak m\}$ is connected whenever $H^i_I(R) = 0$ for $i = n, n-1.$ There are various extensions of
this observation to the case of arbitrary local rings (cf. for instance \cite[Theorem 5.6]{pS4}). 

\begin{lemma} \label{8.4} Let $(A, \mathfrak m)$ denote a local ring. Let $I \subset A$ denote an ideal such that $n = \dim A \geq 2.$ Suppose that:
\begin{itemize}
  \item[(a)] $\Ass \hat A = \Ass \hat A/0_d.$
  \item[(b)] $H^d_{\mathfrak m}(A)$ is indecomposable.
  \item[(c)] $H^i_I(A) = 0$ for $i \geq n-2.$
\end{itemize}
Then $V(I \hat A) \setminus \{\mathfrak m \hat A\}$ is connected. 
\end{lemma}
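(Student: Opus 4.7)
The plan is to argue by contradiction, combining the Mayer--Vietoris sequence for local cohomology with the Hartshorne--Lichtenbaum vanishing theorem and the connectedness of the graph $\mathbb{G}_A$. I would first reduce to the complete case (which is legitimate since the conclusion concerns $\hat A$), and then use condition~(a) to obtain $0 = 0_d$ in $A$, so that $A$ is equidimensional of dimension $n$ with no embedded primes.

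Suppose for contradiction that $V(I) \setminus \{\mathfrak{m}\}$ is disconnected, and write the decomposition via proper ideals $I_1, I_2$ with $V(I_j) \not\subseteq \{\mathfrak{m}\}$, $\sqrt{I_1 \cap I_2} = \sqrt{I}$ and $\sqrt{I_1 + I_2} = \mathfrak{m}$. Applying the Mayer--Vietoris sequence
\[
\cdots \to H^{i-1}_I(A) \to H^i_{\mathfrak{m}}(A) \to H^i_{I_1}(A) \oplus H^i_{I_2}(A) \to H^i_I(A) \to \cdots
\]
at level $i = n$, and invoking the vanishings $H^{n-1}_I(A) = H^n_I(A) = 0$ from~(c), one obtains $H^n_{\mathfrak{m}}(A) \simeq H^n_{I_1}(A) \oplus H^n_{I_2}(A)$. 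Hypothesis~(b) forces one summand, say $H^n_{I_2}(A)$, to vanish. By the Hartshorne--Lichtenbaum vanishing theorem, applicable since $A$ is equidimensional complete local, this translates to $\dim A/(I_2 + \mathfrak{p}) \geq 1$ for every minimal prime $\mathfrak{p}$, while $H^n_{I_1}(A) \neq 0$ supplies some $\mathfrak{p}_* \in \Min A$ with $I_1 + \mathfrak{p}_*$ being $\mathfrak{m}$-primary. The set $\Sigma := \{\mathfrak{p} \in \Min A : I_1 + \mathfrak{p} \text{ is } \mathfrak{m}\text{-primary}\}$ is a non-empty proper subset of $\Min A$ (proper because $V(I_1) \not\subseteq \{\mathfrak{m}\}$ supplies a minimal prime outside $\Sigma$).

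To conclude, I would use the connectedness of $\mathbb{G}_A$ (which follows from~(b) via Corollary~\ref{6.3}) to produce an edge joining some $\mathfrak{p} \in \Sigma$ to some $\mathfrak{q} \in \Min A \setminus \Sigma$; passing to the complete local ring $A/(\mathfrak{p} + \mathfrak{q})$ of dimension $n - 1$ and iterating the analysis above (using the second Mayer--Vietoris isomorphism $H^{n-1}_{\mathfrak{m}}(A) \simeq H^{n-1}_{I_1}(A) \oplus H^{n-1}_{I_2}(A)$, available thanks to the additional vanishing $H^{n-2}_I(A) = 0$ from~(c), together with Hartshorne--Lichtenbaum at dimension $n - 1$) should force some minimal prime $\mathfrak{q}'$ with $I_2 + \mathfrak{q}'$ being $\mathfrak{m}$-primary, contradicting the earlier conclusion for $I_2$. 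The main obstacle is this final step: turning the graph-theoretic data into the desired cohomological contradiction, by carefully tracking how the Mayer--Vietoris splittings descend under the surjection $A \to A/(\mathfrak{p} + \mathfrak{q})$ and making explicit how all three vanishings in~(c) combine with~(b) to close the argument.
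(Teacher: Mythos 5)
Your opening moves are sound: the reduction to the complete case, the use of (a) to get $0=0_d$ and equidimensionality, the Mayer--Vietoris splitting $H^n_{\mathfrak m}(A)\simeq H^n_{I_1}(A)\oplus H^n_{I_2}(A)$ from the vanishing of $H^{n-1}_I(A)$ and $H^n_I(A)$, and the translation of $H^n_{I_2}(A)=0$ via Hartshorne--Lichtenbaum into the statement that $\dim A/(I_2+\mathfrak p)\ge 1$ for every minimal prime $\mathfrak p$. But the proof is not complete: the step you yourself flag as ``the main obstacle'' is precisely the step that is missing, and as sketched it would not go through. Passing to $\bar A=A/(\mathfrak p+\mathfrak q)$ and ``iterating the analysis'' would require the analogues over $\bar A$ of hypotheses (b) and (c) --- indecomposability of $H^{n-1}_{\mathfrak m}(\bar A)$ and $\cd(I\bar A)\le n-3$ --- and neither descends from $A$. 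Your fallback, the isomorphism $H^{n-1}_{\mathfrak m}(A)\simeq H^{n-1}_{I_1}(A)\oplus H^{n-1}_{I_2}(A)$ coming from the extra vanishing $H^{n-2}_I(A)=0$, does not help as stated: $H^{n-1}_{\mathfrak m}(A)$ carries no indecomposability property and is not controlled by Hartshorne--Lichtenbaum, so no statement about minimal primes of $A$ can be read off from it the way it can in top degree. The cohomological contradiction is asserted, not derived. (A correct argument in this spirit does exist --- it is essentially the connectedness bound $c(A/I)\ge\min\{c(A),\dim A-1\}-\cd(I)$ of Divaani-Aazar, Naghipour and Tousi --- but it needs a genuine induction on $\cd(I)$ with carefully chosen auxiliary ideals, not a reduction modulo $\mathfrak p+\mathfrak q$.)

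For comparison, the paper closes the argument by an entirely different route. Using (a) and (b) it identifies $A\subseteq B=\Hom_A(K(A),K(A))\simeq\Hom_A(H^d_{\mathfrak m}(A),H^d_{\mathfrak m}(A))$ as the $S_2$-fication of $A$, shows via (c) and results of \cite{pS2} that the formal cohomology modules $\varprojlim H^i_{\mathfrak m}(B/I^{\alpha}B)$ vanish for $i=0,1$, and then applies the formal-cohomology connectedness criterion (in the spirit of Corollary \ref{8.3}) together with the equality $\Supp_A B/IB\setminus\{\mathfrak m\}=\Spec A/I\setminus\{\mathfrak m\}$, which holds because the cokernel $B/A$ has dimension at most $n-2$. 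If you wish to salvage your approach you must either carry out the induction of Divaani-Aazar et al.\ in full, or switch to the formal-cohomology argument; as it stands the proposal establishes only the first of the two halves of the proof.
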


\begin{proof} Without loss of generality we may assume that $(A, \mathfrak m)$ is a complete local ring. Moreover, we may assume that $0_d = 0.$ By (a) and (b) it follows that 
\[
 A \subseteq B = \Hom_A(K(A), K(A)) \simeq \Hom_A(H^d_{\mathfrak m}(A), H^d_{\mathfrak m}(A))
\]
denotes the $S_2$-fication of $A/0_d$ (cf. Section 6). Then it was shown (cf. the proof of Theorem 5.6
in \cite{pS2}) that $\varprojlim H^i_{\mathfrak m}(B/I^{\alpha}B) = 0$ for $i = 0,1.$  
By virtue of \cite[Lemma 5.4]{pS2} it follows that 
\[
\Supp_A B/IB \setminus \{\mathfrak m\} = \Spec A/I \setminus \{\mathfrak m\}
\] 
is connected. 
\end{proof}

\end{document}